\tikzstyle{NE-lines}=[pattern=north east lines, pattern color=black!45]
\newcommand{\etal}{et~al.}
\newcommand{\Ascseq}{A}                   
\newcommand{\Modasc}{\hat{\Ascseq}}       
\newcommand{\twoplustwo}{\mathbf{2\hspace{-0.2em}+\hspace{-0.2em}2}}
\newcommand{\Prim}{\Modasc^{\mathrm{pr}}} 
\newcommand{\F}{F}                        
\newcommand{\Cay}{\mathrm{Cay}}           
\newcommand{\Sym}{\mathrm{Sym}}                      
\newcommand{\RGF}{\mathrm{RGF}}           
\newcommand{\asc}{\mathrm{asc}}           
\newcommand{\des}{\mathrm{des}}           
\newcommand{\asctops}{\mathrm{top}}       
\newcommand{\nub}{\mathrm{nub}}           
\newcommand{\patta}{\mathfrak{a}}
\newcommand{\pattb}{\mathfrak{b}}
\newcommand{\Par}{\mathrm{Par}}
\mathchardef\mhyphen="2D
\newcommand{\dashpatt}{32\mhyphen 1}
\newcommand{\std}{\mathfrak{st}}
\newcommand{\D}{\mathtt{d}}
\newcommand{\U}{\mathtt{u}}
\newcommand{\dudu}{\D\U\D\U}
\newcommand{\ltrmax}{\mathrm{lrmax}}
\newcommand{\ltrmin}{\mathrm{lrmin}}
\newcommand{\wltrmax}{\mathrm{wlrmax}}
\newcommand{\wltrmin}{\mathrm{wlrmin}}
\newcommand{\rtlmax}{\mathrm{rlmax}}
\newcommand{\rtlmin}{\mathrm{rlmin}}
\newcommand{\wrtlmax}{\mathrm{wrlmax}}
\newcommand{\wrtlmin}{\mathrm{wrlmin}}
\newcommand{\ogf}{{\textsc{ogf}}}
\DeclareMathOperator{\img}{Im}
\newtheorem{theorem}{Theorem}[section]
\newtheorem{theorem*}{Theorem}[section]
\newtheorem{proposition}[theorem]{Proposition}
\newtheorem{lemma}[theorem]{Lemma}
\newtheorem{corollary}[theorem]{Corollary}
\newtheorem*{openproblem*}{Open Problem}
\theoremstyle{definition}
\newtheorem{remark}[theorem]{Remark}
\newtheorem*{remark*}{Remark}
\newtheorem*{example*}{Example}
\title{Pattern-avoiding modified ascent sequences}
\author{
Giulio Cerbai\footnote{The author is a member of the INdAM
research group GNCS.}\\
\emph{Science Institute, University of Iceland}\\
\emph{107 Reykjavik, Iceland}\\
\texttt{giuliocerbai14@gmail.com}
}
\date{}
\begin{document}
\maketitle

\begin{abstract}
We initiate an in-depth study of pattern avoidance on modified
ascent sequences. Our main technique consists in using Stanley's
standardization to obtain a transport theorem between primitive
modified ascent sequences and permutations avoiding
a bivincular pattern of length three. We enumerate some patterns
via bijections with other combinatorial structures such as Fishburn
permutations, lattice paths and set partitions.
We settle the last remaining case of a conjecture by
Duncan and Steingr\'imsson by proving that modified
ascent sequences avoiding~$2321$ are counted by the Bell numbers.
\end{abstract}

\section{Introduction}

Modified ascent sequences have recently assumed a central role
in the study of Fishburn structures. Originally~\cite{BMCDK}, they
were defined as the bijective image of (plain) ascent sequences under
a certain hat map, with the primary role of making their relation with
$(\twoplustwo)$-free posets more transparent. More recently, Claesson
and the current author~\cite{CC} introduced the Burge transpose to
develop a theory of transport of patterns between modified ascent
sequences and Fishburn permutations, defined as those avoiding a certain
bivincular pattern of length three. They also characterized modified
ascent sequences as Cayley permutations where each entry is a
leftmost copy if and only if it sits at an ascent top (see also
Proposition~\ref{prop_modasc_prop}).
This alternative description---not relying on the hat map---opened
the door for a study of modified ascent sequences as independent
objects, under both a geometrical and enumerative perspective.
Ultimately, it led to the introduction by the same authors of
Fishburn trees~\cite{CC2}. This class of binary, labeled trees
originates from the max-decomposition of modified ascent sequences.
Conversely, modified ascent sequences are obtained by reading the
labels of Fishburn trees with the in-order traversal.
The relation between Fishburn trees and other Fishburn structures,
namely Fishburn matrices and $(\twoplustwo)$-free posets, is
extremely transparent. For instance, Fishburn matrices arise by
decomposing a Fishburn tree with respect to its maximal right-paths.
The reader who is interested in the state of the art on Fishburn
structures is referred to the same paper~\cite{CC2}.

Motivated by all the above reasons, we conduct a more
systematic study of pattern avoidance on modified ascent sequences,
using a variety of combinatorial tools and methods.
Our investigation is parallel to the one by Duncan and Steingr\'imsson
on plain ascent sequences~\cite{DS}.
Given a pattern~$y$, our goal is to ``solve'' it by counting the number
of modified ascent sequences of given length that avoid~$y$.
Here, to count means to obtain an explicit formula, when possible,
a generating function, or a bijection with another combinatorial
structure whose enumeration is known.
An overview of our results can be found in Table~\ref{table_patts}.
Our main technique relies on what could be merely regarded
as a ``trick''---one that is unexpectedly effective in practical
terms. Namely, we study primitive ascent sequences first, defined
in Section~\ref{section_primitive} as those with no pairs of
consecutive equal entries. We show that Stanley's
standardization~\cite{St} maps bijectively primitive modified
ascent sequences to the set~$\Omega$ of permutations that start with~$1$
and avoid the bivincular pattern~$\omega$, defined in
Section~\ref{section_std}.
As a result, we obtain in Theorem~\ref{thm_transport}
a mechanism to transport patterns between primitive modified ascent
sequences and~$\Omega$. The main advantage of this approach is that
it often allows us to work with permutations, a task that is much easier
due to the arsenal of tools at our disposal.
Finally, as showed in Proposition~\ref{prop_prim_to_full}, by applying
a simple binomial transform to the counting sequence of primitive
words we immediately obtain the enumeration of the general case.

Let us end this preamble with a more detailed presentation of
our paper.

\begin{table}
\centering
\def\arraystretch{1}
\begin{tabular}{llll}
\toprule
$y$ & $|\Modasc_n(y)|$ & $|\Prim_n(y)|$ & Reference\\
\midrule
11 & $1,1,1,\dots$ & $1,1,1,\dots$ & Section~\ref{sec_patt_len2}\\
12 & $1,1,1,\dots$ & $1,0,0,\dots$ & Section~\ref{sec_patt_len2}\\
21,121 & $2^{n-1}$ & $1,1,1,\dots$ & Section~\ref{sec_patt_len2}\\
\midrule
112 & $2^{n-1}$ & Fibonacci & Section~\ref{sec_112}\\
122 & A026898 & A229046 & Section~\ref{section_122}\\
123 & $2^{n-1}$ & $1,1,1,\dots$ & Section~\ref{section_123}\\
132 & Odd Fibonacci & Fibonacci & Section~\ref{section_132}\\
212,1212 & Bell & Bell (shifted) & \cite{Ce2}\\
213,1213 & Catalan & Motzkin &
Sections~\ref{section_213_231_321},\;\ref{section_1213_1312}\\
221 & New & Bell (shifted) & Section~\ref{section_221_2321}\\
231 & Catalan & Motzkin & Section~\ref{section_213_231_321}\\
312,1312 & New & A102407 &
Sections~\ref{section_312},\;\ref{section_1213_1312}\\
321 & A007317 & Catalan & Section~\ref{section_213_231_321}\\
\midrule
1123  & Catalan & A082582? & \cite{CC}\\
1232 & A047970 & A229046 & Section~\ref{section_1232}\\
1234 & Catalan & Motzkin & Section~\ref{section_123}\\
2132 & Bell & Bell (shifted) & \cite{Ce2}\\
2213 & Bell & ? & \cite{Ce2}\\
2231 & Bell & ? & \cite{Ce2}\\
2321 & Bell & Bell (shifted) & Section~\ref{section_221_2321}\\
\bottomrule
\end{tabular}
\caption{Enumeration of modified ascent sequences avoiding a
single pattern~$y$. The counting sequences start from $n=1$.
Patterns in the same row determine the same set of sequences,
while a question mark denotes numerical data that we were not able
to confirm.}\label{table_patts}
\end{table}

In Section~\ref{sec_prel}, we give a short introduction to
permutation patterns and define (primitive) modified ascent
sequences. Then, we prove in Proposition~\ref{prop_prim_to_full}
that if~$y$ is a primitive pattern, then modified ascent sequences
avoiding~$y$ are counted by a binomial transform of their primitive
counterpart.

In Section~\ref{section_std}, we recall the definition of Stanley's
standardization and prove some related properties. The main result
of this section, Theorem~\ref{thm_transport}, is the theorem of
transport between~$\Omega$ and the set of primitive modified ascent sequences mentioned previously.

In Section~\ref{section_easy}, we enumerate modified ascent sequences
avoiding any pattern of length two, as well as a couple of simple
patterns of length three. We give a bijection between modified
ascent sequences avoiding~$122$ and set partitions whose minima
of blocks form an interval, computing some related generating
functions in the process.

In Section~\ref{section_prim_patts}, we solve several primitive patterns
with the machinery of Proposition~\ref{prop_prim_to_full} and
Theorem~\ref{thm_transport}. The hardest one is~$312$, which
we settle by showing a bijection with Dyck paths avoiding the
consecutive subpath~$\dudu$. Our construction is based on a
geometric decomposition of Dyck paths that leads to a generating
function first discovered by Sapounakis, Tasoulas and
Tsikouras~\cite{STT}.

In Section~\ref{section_221_2321}, we slightly tweak
Proposition~\ref{prop_prim_to_full} to solve the pattern~$221$, which
is not primitive. En passant, we prove in Proposition~\ref{conj_DS}
that modified ascent sequences avoiding~$2321$ are enumerated by the
Bell numbers, settling the last remaining case of a conjecture first
proposed by Duncan and Steingr\'imsson~\cite{DS} and solved only
partially by the current author~\cite{Ce2}.

In Section~\ref{section_final}, we provide some data for the
unsolved patterns and leave some suggestions for future work.

\section{Preliminaries}\label{sec_prel}

Given a natural number~$n\ge 0$, let $[n]=\lbrace 1,2,\dots, n\rbrace$.
An \emph{endofunction} of size $n$ is a map $x:[n]\to[n]$.
We shall identify~$x$ with the word $x=x_1\dots x_n$,
where $x_i=x(i)$ for each $i\in[n]$.
When $n=0$, we identify the empty endofunction with the empty word.
A \emph{Cayley permutation}~\cite{Ca,MF} is an endofunction
$x:[n]\to[n]$ whose image is $\img(x)=[k]$, for some $k\le n$.
In other words, an endofunction~$x$ is a Cayley permutation if it
contains at least a copy of every integer between $1$ and its
maximum value.
For the rest of this paper, if $A$ is a set whose elements are
equipped with a notion of size, we will denote with $A_n$ the set
of elements in $A$ of size $n$. Conversely, given a definition
of $A_n$ (of elements of size $n$) we assume $A=\cup_{n\geq 0}A_n$.
As an example, we define the set of Cayley permutations
of size $n$ as $\Cay_n$ and let $\Cay=\cup_{n\geq 0}\Cay_n$.
A Cayley permutation $x=x_1\cdots x_n$ with $\max(x)=k$ encodes the
ordered set partition $B_1\dots B_k$, where $i\in B_{x_i}$.
The map defined this way is bijective, and for this reason
Cayley permutations are counted by the Fubini numbers (listed as 
sequence A000670 in the OEIS~\cite{Sl}).

A \emph{left-to-right minimum} (briefly, $\ltrmin$) of
$x=x_1\cdots x_n$ is a pair $(i,x_i)$ such that
$x_i<\min(x_1\cdots x_{i-1})$. If we replace the strict inequality
with a weak one, i.e. if $x_i\le\min(x_1\cdots x_{i-1})$, then $(i,x_i)$
is said to be a \emph{weak left-to-right minimum} (briefly, $\wltrmin$).
We denote the set of $\ltrmin$ and $\wltrmin$ of $x$ respectively
by $\ltrmin(x)$ and $\wltrmin(x)$. Left-to-right maxima, right-to-left
minima and maxima, as well as their weak counterparts, are
defined analogously. When there is no ambiguity, we omit the
index~$i$ from the pair $(i,x_i)$. For instance, we sometimes write
$\wrtlmax(x)=\{x_i: x_i\ge x_j\text{ for each $j>i$}\}$.

An comprehensive introduction to permutation patterns can be found in
the book by Kitaev~\cite{Ki}. Bevan's note~\cite{Be} contains a
brief presentation of the most used notions and definitions
in the permutation patterns field. Below, we quickly recall
those that are necessary in this paper.

Let $x\in\Cay_n$ and $y\in\Cay_k$ be two Cayley permutations,
with $k\le n$. We say that~$x$ \emph{contains}~$y$ if there is a
subsequence $x_{i_1}x_{i_2}\cdots x_{i_k}$, with $i_1<i_2<\cdots<i_k$,
that is \emph{order isomorphic} to $y$. Here, order isomorphic means
that $x_{i_s}<x_{i_t}$ if and only if $y_s<y_t$, and $x_{i_s}=x_{i_t}$
if and only if $y_s=y_t$. In this case, we write $x\ge y$ and
$x_{i_1}x_{i_2}\cdots x_{i_k}\simeq y$; further, the subsequence
$x_{i_1}x_{i_2}\cdots x_{i_k}$ is an \emph{occurrence} of the
\emph{pattern} $y$ in $x$. If no subsequence of $x$ is order isomorphic
to $y$, we say that $x$ \emph{avoids}~$y$. Given a pattern~$y$, we
let $\Cay(y)$ be the set of Cayley permutations that avoid $y$.
More in general, when $B$ is a set of patterns, $\Cay(B)$ shall denote
the set of Cayley permutations avoiding every pattern in $B$.
We use analogous notations for subsets of $\Cay$, as well as for
other types of pattern. For instance, $\Modasc(112)$ denotes the
set of modified ascent sequences (defined in Section~\ref{section_modasc})
avoiding the pattern $112$. The set of \emph{permutations} (i.e. bijective
endofunctions) is defined via pattern avoidance as $\Sym=\Cay(11)$.

Classical patterns are generalized by mesh patterns and Cayley-mesh
patterns. A \emph{mesh pattern}~\cite{Cl} is a pair $(y,R)$, where
$y\in\Sym_k$ is a permutation (classical pattern) and
$R\subseteq\left[0,k\right]\times\left[0,k\right]$ is a set of pairs
of integers. The pairs in $R$ identify the lower left corners of unit
squares in the plot of $x$ which specify forbidden regions. An occurrence
of the mesh pattern $(y,R)$ in the permutation $x$ is an occurrence of
the classical pattern $y$ such that no other points of $x$ occur in the
forbidden regions specified by~$R$. By allowing additional regions for
repeated entries, we arrive at \emph{Cayley-mesh patterns}~\cite{Ce};
that is, mesh patterns on Cayley permutations. To ease notation, we
often define a (Cayley-)mesh pattern~$(y,R)$ by simply plotting the underlying
classical pattern~$y$, with the forbidden regions determined by~$R$
shaded. An interesting example is the following. Claesson and the current
author~\cite{CC} characterized the set~$\Modasc$ of \emph{modified ascent
sequences} as~$\Modasc=\Cay(\patta,\pattb)$, where $\patta$
and~$\pattb$ are defined by Figure~\ref{figure_mesh_patts}.

\begin{figure}
$$
\patta \,=\,
\begin{tikzpicture}[scale=0.50, baseline=19pt]
\fill[NE-lines] (2.15,0) rectangle (2.85,3);
\draw [semithick] (0,0.85) -- (4,0.85);
\draw [semithick] (0,1.15) -- (4,1.15);
\draw [semithick] (0,1.85) -- (4,1.85);
\draw [semithick] (0,2.15) -- (4,2.15);
\draw [semithick] (0.85,0) -- (0.85,3);
\draw [semithick] (1.15,0) -- (1.15,3);
\draw [semithick] (1.85,0) -- (1.85,3);
\draw [semithick] (2.15,0) -- (2.15,3);
\draw [semithick] (2.85,0) -- (2.85,3);
\draw [semithick] (3.15,0) -- (3.15,3);
\filldraw (1,2) circle (5pt);
\filldraw (2,1) circle (5pt);
\filldraw (3,2) circle (5pt);
\end{tikzpicture}
\qquad
\pattb \,=\,
\begin{tikzpicture}[scale=0.50, baseline=19pt]
\fill[NE-lines] (1.15,0) rectangle (1.85,3);
\fill[NE-lines] (0,0.85) rectangle (0.85,1.15);
\draw [semithick] (0,0.85) -- (3,0.85);
\draw [semithick] (0,1.15) -- (3,1.15);
\draw [semithick] (0,1.85) -- (3,1.85);
\draw [semithick] (0,2.15) -- (3,2.15);
\draw [semithick] (0.85,0) -- (0.85,3);
\draw [semithick] (1.15,0) -- (1.15,3);
\draw [semithick] (1.85,0) -- (1.85,3);
\draw [semithick] (2.15,0) -- (2.15,3);
\filldraw (1,2) circle (5pt);
\filldraw (2,1) circle (5pt);
\end{tikzpicture}
$$
\caption{Cayley-mesh patterns such that $\Modasc=\Cay(\patta,\pattb)$.
}\label{figure_mesh_patts}
\end{figure}

\subsection{Modified ascent sequences}\label{section_modasc}

Recall from the end of the previous section that the set~$\Modasc$ of
modified ascent sequences is $\Modasc=\Cay(\patta,\pattb)$, where~$\patta$
and~$\pattb$ are depicted in Figure~\ref{figure_mesh_patts}.
Let us point out that this definition departs slightly from the original
one~\cite{BMCDK}: our sequences are $1$-based instead of being
$0$-based. Below we recall two useful alternative definitions
of $\Modasc$. Given a Cayley permutation $x$ of length~$n$, let
$$
\asctops(x)= \{(1,x_1)\}\cup \{(i, x_i): 1 < i \le n,\, x_{i-1} < x_i\}
$$
be the set of \emph{ascent tops} and their indices, including the
first element; further, let
$$
\nub(x) = \{(\min x^{-1}(j), j): 1\leq j\leq \max(x) \}
$$
be the set of \emph{leftmost copies} and their indices.
When there is no ambiguity, we will sometimes abuse notation and
simply write $x_i\in\nub(x)$ or $x_i\in\asctops(x)$. 
If $x_i\in\nub(x)$ and $x_i=a$, we say that $x_i$ is the
leftmost copy of $a$ in $x$; or, that $x_i$ is a leftmost copy
in $x$. It is easy to see~\cite{CC} that $x$ avoids~$\mathfrak{a}$
if and only if $\asctops(x)\subseteq\nub(x)$; similarly, $x$
avoids~$\mathfrak{b}$ if and only if $\asctops(x)\supseteq\nub(x)$.
The next proposition, which will be repeatedly used throughout the
whole paper, follows immediately.

\begin{proposition}\label{prop_modasc_prop}
We have
$$
\Modasc=\{x\in\Cay: \asctops(x)=\nub(x)\}.
$$
In particular, in a modified ascent sequence~$x$ all the ascent tops
have distinct values and $\max(x)=|\asctops(x)|$.
Furthermore, all the copies of $\max(x)$ are in consecutive
positions.
\end{proposition}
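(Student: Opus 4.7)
The first equality is essentially immediate from what has been established just before the statement. The paper defined $\Modasc = \Cay(\patta,\pattb)$ and recorded that $x$ avoids $\patta$ iff $\asctops(x) \subseteq \nub(x)$, and $x$ avoids $\pattb$ iff $\asctops(x) \supseteq \nub(x)$. So my plan is to simply intersect these two equivalences: $x \in \Modasc$ iff $x$ avoids both $\patta$ and $\pattb$ iff both inclusions hold iff $\asctops(x) = \nub(x)$. If one wants a self-contained derivation of the two inclusions, it is a short inspection of Figure~\ref{figure_mesh_patts}: the shaded column of $\patta$ forces the rightmost of the three points to occupy the position immediately after the middle one, which together with $x_{j_1}=x_{j_3}>x_{j_2}$ says precisely that $x_{j_3}$ is an ascent top that is not a leftmost copy; symmetrically, an occurrence of $\pattb$ is a leftmost copy (the lower horizontal shading rules out earlier copies of its value) preceded by a strictly larger adjacent entry (the column shading forces adjacency), that is, a leftmost copy that is not an ascent top.

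Granted $\asctops(x) = \nub(x)$, the three subsidiary assertions become exercises in bookkeeping. Any two distinct members of $\nub(x)$ carry different values by definition of $\nub$, so the same holds for $\asctops(x)$, giving the distinctness of ascent top values. Because $x$ is a Cayley permutation, each value $j \in [\max(x)]$ is attained at least once, so $|\nub(x)| = \max(x)$; transporting through the equality yields $\max(x) = |\asctops(x)|$.

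The one step with actual content is the claim about copies of the maximum, and I would argue it directly. Let $m = \max(x)$ and let $i_1 < i_2 < \cdots < i_t$ be the positions at which $x$ takes the value $m$. By definition $i_1$ is the unique leftmost copy of $m$, and for $s \geq 2$ the position $i_s$ is not a leftmost copy, hence, via $\asctops(x) = \nub(x)$, not an ascent top. Therefore $x_{i_s - 1} \geq x_{i_s} = m$, and maximality of $m$ forces $x_{i_s - 1} = m$. This puts $i_s - 1$ in $\{i_1, \ldots, i_{s-1}\}$, which can only happen if $i_s - 1 = i_{s-1}$, so the positions are consecutive. I do not anticipate any obstacle here; the whole proposition is a repackaging of the mesh-pattern characterization of $\Modasc$ proved in~\cite{CC}, and the main risk is over-explaining a statement whose content is largely definitional.
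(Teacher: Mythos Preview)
Your proposal is correct and follows the same route as the paper, which simply records the two equivalences $x$ avoids $\patta \Leftrightarrow \asctops(x)\subseteq\nub(x)$ and $x$ avoids $\pattb \Leftrightarrow \asctops(x)\supseteq\nub(x)$ (citing~\cite{CC}) and then declares that the proposition ``follows immediately,'' giving no further details. You have faithfully unpacked this, and your argument for the consecutivity of the copies of $\max(x)$---which the paper leaves entirely to the reader---is clean and correct.
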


Finally, a recursive definition of $\Modasc$ goes as follows~\cite{CC}.
There is exactly one modified ascent sequence of length zero and one,
the empty word and the single letter word $1$, respectively.
For $n\geq 1$, every $y\in\Modasc_{n+1}$ is of one of two forms depending
on whether the last letter forms an ascent with the penultimate letter:
\begin{itemize}
\item $y=x_1\cdots x_n\;x_{n+1}$,\,with\, $1\le x_{n+1}\le x_n$, or
\item $y=\tilde{x}_1\cdots\tilde{x}_n\;x_{n+1}$,\,with\,
$x_n<x_{n+1}\le 1+\max(x_1\cdots x_n)$,
\end{itemize}
where $x_1\cdots x_n\in\Modasc_{n}$ and, for $i\in[n]$,
$$
\tilde{x}_i=
\begin{cases}
x_i & \text{if }x_i<x_{n+1}\\
x_i+1 & \text{if }x_i\ge x_{n+1}.
\end{cases}
$$
Less formally, each modified ascent sequence $x$ gives rise to
$\max(x)+1$ modified ascent sequences of length one more. These are
obtained by first inserting a new rightmost entry that is less than
or equal to $\max(x)+1$; and, secondly, if the newly added entry~$a$
is an ascent top, by increasing by one all the previous entries that
are greater than or equal to~$a$.

We wrap up this section with a remark. One of the main benefits of
working with modified ascent sequences is that they are Cayley
permutations. This is not the case of (plain) ascent sequences,
where the presence of gaps makes the study of patterns arguably
less natural. A rather awkward example is the following: there are
two ascent sequences of length five, namely $12123$ and $12124$, that
contain the length five pattern~$12123$.

\subsection{Primitive sequences}\label{section_primitive}

A \emph{flat step} in a modified ascent sequence~$x=x_1\cdots x_n$
consists of two consecutive equal entries $x_i=x_{i+1}$.
A modified ascent sequence is \emph{primitive}~\cite{DKRS} if it has
no flat steps, and we let $\Prim$ denote the set of primitive
modified ascent sequences. In the realm of Fishburn structures,
primitive (modified) ascent sequences are in bijection with binary
Fishburn matrices~\cite{DP}, $(\twoplustwo)$-free posets with no
indistinguishable elements~\cite{DKRS}, strictly-decreasing Fishburn trees~\cite{CC2}, and Fishburn permutations avoiding a bivincular
pattern of length two.
It is well known~\cite[Prop.~8]{DKRS} that any ascent sequence is
uniquely obtained from a primitive ascent sequence by inserting
flat steps in a suitable way. Clearly, e.g. by 
Proposition~\ref{prop_modasc_prop}, the same property holds for modified
ascent sequences too. For instance, the sequence
$$
x=1\;\underline{11}\;312\;\underline{222}\;421\;\underline{1}\in\Modasc
\quad\text{arises from}\quad
w=1312421\in\Prim
$$
by inserting the underlined flat steps. More interestingly,
if $y$ is a primitive pattern and $w\in\Modasc(y)$, then the insertion
of flat steps in $w$ does not create any occurrence of~$y$.
We state the enumerative consequences of this simple observation
in the following proposition.

\begin{proposition}\label{prop_prim_to_full}
Let $y\in\Prim$. Then, for $n\ge 1$,
\begin{equation}\label{eq_prim}
|\Modasc_n(y)|=\sum_{k=1}^n\binom{n-1}{k-1}|\Prim_k(y)|.
\end{equation}
\end{proposition}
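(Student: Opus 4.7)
The plan is to set up an explicit bijection between $\Modasc_n(y)$ and the disjoint union, over $k=1,\dots,n$, of $\Prim_k(y)$ paired with the compositions of $n$ into $k$ positive parts; the binomial identity then falls out of the standard count of such compositions.

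First I would formalise the decomposition of a modified ascent sequence into a primitive part plus flat runs. Given $x\in\Modasc_n$, let $w=w_1\cdots w_k$ be the sequence obtained by collapsing each maximal run of consecutive equal entries of $x$ to a single letter, and let $m_i\ge 1$ be the length of the $i$-th flat run. As noted in the paragraph preceding the proposition (and proved in the cited references for plain ascent sequences, the modified case being analogous via Proposition~\ref{prop_modasc_prop}), the assignment $x\mapsto (w,(m_1,\dots,m_k))$ is a bijection from $\Modasc_n$ to $\bigsqcup_{k=1}^{n}\Prim_k\times C_{n,k}$, where $C_{n,k}$ is the set of compositions of $n$ into $k$ positive parts.

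Next I would prove that this bijection restricts to $\Modasc_n(y) \leftrightarrow \bigsqcup_{k=1}^{n}\Prim_k(y)\times C_{n,k}$, which is the crux of the argument. The easy direction is that if $w$ already contains $y$ then so does $x$, since $w$ occurs as a subsequence of $x$. For the reverse direction, suppose $x_{i_1}\cdots x_{i_k}\simeq y$ is an occurrence of the primitive pattern $y$ in $x$. I claim that no two indices $i_j,i_{j+1}$ can lie in the same flat run of $x$: if they did, we would have $x_{i_j}=x_{i_{j+1}}$ and hence $y_j=y_{j+1}$, contradicting the primitivity of $y$. Thus each $i_j$ lies in a distinct flat run, and picking one representative index per run yields a subsequence of $w$ that is again order-isomorphic to $y$. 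Hence $w$ contains $y$, and contrapositively $w\in\Prim_k(y)$ whenever $x\in\Modasc_n(y)$.

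Finally, since $|C_{n,k}|=\binom{n-1}{k-1}$, summing over $k$ gives
\begin{equation*}
|\Modasc_n(y)| \;=\; \sum_{k=1}^{n} |\Prim_k(y)|\cdot |C_{n,k}| \;=\; \sum_{k=1}^{n}\binom{n-1}{k-1}|\Prim_k(y)|,
\end{equation*}
as required. The only genuinely new ingredient is the equivalence of avoidance before and after inserting flat steps; this is where primitivity of $y$ is indispensable, and it is the step I would expect a careful reader to want spelled out, though the argument is the short one given above.
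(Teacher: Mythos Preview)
Your proof is correct and follows the same approach as the paper's: collapse flat runs to obtain a primitive word, count the ways to reinsert flat steps as compositions, and observe that a primitive pattern~$y$ is contained in~$x$ iff it is contained in the collapsed word~$w$. You spell out this last equivalence more carefully than the paper (which merely asserts it in the preceding paragraph); one small notational slip is that you reuse~$k$ both for the number of flat runs and for the length of the occurrence of~$y$.
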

\begin{proof}
Any $x\in\Modasc_n(y)$ is obtained uniquely from some $w\in\Prim_k(y)$,
with $1\le k\le n$, by inserting $n-k$ flat steps. Note that $w$ is
obtained by collapsing all the consecutive flat steps of $x$ to a single
entry. Since $x_1=1$ is fixed, there are $\binom{n-1}{k-1}$ positions
where the remaining $k-1$ entries of $w$ can be placed.
\end{proof}

A consequence of Proposition~\ref{prop_prim_to_full} is that
enumerating~$\Prim(y)$ is sufficient in order to count the whole
set~$\Modasc(y)$, when~$y$ is a primitive pattern.
We can also rephrase this result in terms of generating functions.
From now on, given a pattern~$y$, we let
$$
\Modasc_y(t)=\sum_{n\ge 0}|\Modasc_n(y)|t^n
\quad\text{and}\quad
\Prim_y(t)=\sum_{n\ge 0}|\Prim_n(y)|t^n
$$
be the {\ogf} (ordinary generating functions) of $\Modasc(y)$ and
$\Prim(y)$, respectively.
It is well known (see for instance Bernstein and Sloane~\cite{BS})
that if
$$
b_n=\sum_{k=0}^n\binom{n}{k}a_k
\quad\text{then}\quad
B(t)=\frac{1}{1-t}A\left(\frac{t}{1-t}\right),
$$
where $A(t)=\sum_{n\ge 0}a_nt^n$ and $B(t)=\sum_{n\ge 0}b_nt^n$.
By Proposition~\ref{prop_prim_to_full}, keeping track of the shift,
\begin{equation}\label{eq_prim_to_full}
\begin{aligned}
\frac{\Modasc_y(t)-1}{t}&=
\frac{1}{1-t}\left[\frac{\Prim_y(s)-1}{s}\right]_{|s=\frac{t}{1-t}}\\
\iff\Modasc_y(t)&=
1+\frac{t}{1-t}\left[\frac{\Prim_y(s)-1}{s}\right]_{|s=\frac{t}{1-t}}\\
&=\Prim_y\left(\frac{t}{1-t}\right).
\end{aligned}
\end{equation}
%

We end this section with a simple lemma.

\begin{lemma}\label{lemma_prim_stats}
If~$x$ is a primitive modified ascent sequence, then
$$
\wltrmax(x)=\ltrmax(x)
\quad\text{and}\quad
\wrtlmax(x)=\rtlmax(x).
$$
Furthermore, $\ltrmin(x)=\{x_1\}$.
\end{lemma}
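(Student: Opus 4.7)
The plan is to derive all three equalities as direct consequences of Proposition~\ref{prop_modasc_prop} (so $\asctops(x)=\nub(x)$) together with primitivity (no two consecutive equal entries). In each case I would argue by contradiction, assuming a non-strict extremum that is not strict, and showing that the offending entry would have to be both an ascent top and a non-leftmost copy.

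First, I would dispose of $\ltrmin(x)=\{x_1\}$: by the recursive definition of $\Modasc$ we have $x_1=1$, and since then $\min(x_1\cdots x_{i-1})=1$ for $i\ge 2$, no later entry $x_i$ can be strictly smaller. This part does not even need primitivity.

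For $\wltrmax(x)=\ltrmax(x)$, suppose there is an index $i\ge 2$ with $x_i$ a weak but not strict left-to-right maximum, i.e.\ $x_i=\max(x_1\cdots x_{i-1})$ and $x_j=x_i$ for some $j<i$. Since $x_{i-1}\le x_i$, primitivity forces $x_{i-1}<x_i$, so $(i,x_i)\in\asctops(x)$. By Proposition~\ref{prop_modasc_prop} this means $(i,x_i)\in\nub(x)$, contradicting the existence of the earlier copy $x_j=x_i$.

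The $\wrtlmax$ case is the one I expect to need slightly more care, because I must locate a useful ascent top to the \emph{right} of the offending entry. Suppose $x_i$ is a weak but not strict right-to-left maximum, and let $j$ be the smallest index with $j>i$ and $x_j=x_i$. Primitivity rules out $j=i+1$, so $j\ge i+2$; by minimality of $j$ and because $x_{j-1}\le x_i$ (weak rtlmax), we have $x_{j-1}<x_i=x_j$. Hence $(j,x_j)$ is an ascent top, which by Proposition~\ref{prop_modasc_prop} must be a leftmost copy of its value. This contradicts $x_i=x_j$ with $i<j$, completing the argument. The main (minor) obstacle is exactly this choice of $j$: one has to pick the \emph{first} repetition to the right of $i$ in order to guarantee strict inequality at $x_{j-1}$, which is what converts the weak-maximum hypothesis into the existence of an ascent top.
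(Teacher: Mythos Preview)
Your proof is correct and follows the same line as the paper's: both use primitivity to turn a weak extremum into an ascent top and then invoke $\asctops(x)=\nub(x)$ to derive a contradiction. The only cosmetic difference is in the $\wrtlmax$ case, where you pick the \emph{smallest} $j>i$ with $x_j=x_i$ to force $x_{j-1}<x_j$ directly, whereas the paper takes an arbitrary such $j$ and does a three-way case split on the sign of $x_{j-1}-x_j$.
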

\begin{proof}
It is clear that $\ltrmin(x)=\{x_1\}$ since $x_1=1$.
The inclusions $\wltrmax(x)\supseteq\ltrmax(x)$ and
$\wrtlmax(x)\supseteq\rtlmax(x)$ are trivial.
Let $x_i\in\wltrmax(x)$. Since $x$ is primitive,
we have $x_{i-1}<x_i$. Thus $x_i\in\asctops(x)=\nub(x)$ and
$x_i$ is a strict left-to-right maximum.
We have thus proved that $\wltrmax(x)=\ltrmax(x)$.
Next, let $x_i\in\wrtlmax(x)$. For a contradiction,
suppose that $x_i\notin\rtlmax(x)$; that is, there is some
$x_j=x_i$, with $j>i$. Note that $x_j\in\wrtlmax(x)$.
Further, it must be $j>i+1$ since $x$ is primitive.
Now, consider the entry $x_{j-1}$ preceding $x_j$.
If $x_{j-1}<x_j$, then we have a contradiction with the fact
that $x_j\in\asctops(x)=\nub(x)$ and $x_j=x_i$.
If $x_{j-1}=x_j$, then we have a flat step, which is forbidden.
Finally, if $x_{j-1}>x_j$, then $x_i\notin\wrtlmax(x)$,
which is once again a contradiction.
\end{proof}

\section{Standardization of $\Modasc$}\label{section_std}

A commonly used tool to reduce problems about multisets to sets
is given by the \emph{standardization} map, here denoted by~$\std$.
The name standardization is due to Stanley~\cite[Prop.~1.7.1]{St},
but the oldest reference we could find goes back to a classic paper
by Schensted~\cite{Sc} from 1961.
Let $x=x_1\cdots x_n$ be a Cayley permutation with $\max(x)=k$.
Let $a_i$ be the number of copies of~$i$ contained in $x$, for $i\in[k]$.
Then $\std(x)$ is the permutation obtained by replacing the $a_i$
copies of $i$ with
$$
a_1+\cdots+a_{i-1}+1,\;a_1+\cdots+a_{i-1}+2,\;\dots,\;a_1+\cdots+a_{i-1}+a_i,
$$
going from left to right. More informally, we replace the~$a_1$ copies
of~$1$ with the numbers~$1,2,\dots,a_1$, the~$a_2$ copies of~$2$
with~$a_1+1,a_1+2,\dots,a_1+a_2$, and so on. For instance,
we have $\std(312112341)=715236894$, where the~$1$s are replaced
by $1,2,3,4$, the~$2$s by $5,6$, the~$3$s by $7,8$, and the only~$4$
is replaced by $9$.
Some simple properties satisfied by the standardization map are listed
in the following three results, where~$x$ is a Cayley permutation of
length~$n$ and $p=\std(x)$. The easy proofs are omitted or just sketched.

\begin{lemma}\label{lemma_std_ascdes}
For each $i<j$,
$$
x_i\le x_j\iff p_i<p_j.
$$
In particular, standardization preserves (strict) descents and maps weak
ascents to ascents. Further, it maps flat steps $x_i=x_{i+1}$ to
ascents $p_{i+1}=p_i+1$ that are consecutive in value.
\end{lemma}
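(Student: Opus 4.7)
The plan is to unpack the definition of $\std$ directly. Write $x$ with maximum $k$ and let $a_i$ be the number of copies of $i$ in $x$, with cumulative sums $S_i = a_1+\cdots+a_i$ (so $S_0=0$). By definition, for every position $\ell$ with $x_\ell = v$, the value $p_\ell$ lies in the ``block'' $\{S_{v-1}+1,\dots,S_v\}$, and within that block the values are assigned strictly left to right.

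Next, I would do a three-case analysis on the pair $i<j$. First, if $x_i < x_j$, then the block of $p_i$ lies entirely below the block of $p_j$, so $p_i \le S_{x_i} \le S_{x_j-1} < p_j$. Second, if $x_i = x_j$, then $p_i$ and $p_j$ live in the same block and the left-to-right assignment rule forces $p_i<p_j$. Third, if $x_i > x_j$, symmetry with the first case gives $p_i > p_j$. These three cases are exhaustive, so $x_i \le x_j \iff p_i < p_j$, which is the main equivalence. Taking the contrapositive in adjacent positions gives that a strict descent $x_i > x_{i+1}$ becomes $p_i > p_{i+1}$, and that any weak ascent $x_i \le x_{i+1}$ becomes a strict ascent $p_i < p_{i+1}$.

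Finally, for the sharper statement about flat steps, I would observe that if $x_i = x_{i+1} = v$ then, because these two copies of $v$ sit at consecutive positions, no other copy of $v$ lies between them; hence $p_i$ and $p_{i+1}$ receive \emph{consecutive} values from the block $\{S_{v-1}+1,\dots,S_v\}$, yielding $p_{i+1}=p_i+1$.

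There is no real obstacle: the whole argument is a direct bookkeeping on the blocks assigned by $\std$. The only minor care point is the flat-step clause, where one must invoke the ``no copy of $v$ strictly between positions $i$ and $i+1$'' observation to upgrade $p_i<p_{i+1}$ to $p_{i+1}=p_i+1$.
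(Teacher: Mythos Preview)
Your proof is correct and complete. The paper actually omits the proof of this lemma entirely, remarking that ``the easy proofs are omitted or just sketched,'' so there is no argument to compare against; your direct block-bookkeeping on the cumulative sums $S_v$ is precisely the natural way to establish the equivalence and the flat-step refinement.
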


\begin{lemma}\label{lemma_std_nub}
Let $i<j$ such that $p_i=p_j+1$. Then $x_i\in\nub(x)$.
\end{lemma}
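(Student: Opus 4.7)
The plan is to read off both $p_j$ and $p_i$ directly from the recipe for $\std$. Write $a = x_j$ and, following the definition, let $a_\ell$ denote the number of copies of $\ell$ in $x$. Then $\std$ assigns to the copies of $a$ the consecutive block of labels
$a_1 + \cdots + a_{a-1} + 1,\;\ldots,\;a_1 + \cdots + a_a$,
distributed left-to-right among those copies, and the analogous statement holds for every other value in $\img(x) = [k]$. The goal is to place $p_j$ and $p_i$ inside this block structure.

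First I would invoke Lemma~\ref{lemma_std_ascdes}: since $i < j$ and $p_i = p_j + 1 > p_j$, the equivalence $x_i \le x_j \iff p_i < p_j$ forces $x_i > x_j = a$ strictly. This is what will rule out $p_i$ labelling a second copy of $a$. Next I would show that $p_j$ must be the top of its block, i.e.\ $p_j = a_1 + \cdots + a_a$; otherwise $p_i = p_j + 1$ would still fall in the $a$-block, giving $x_i = a$ and contradicting the previous step. Consequently $p_i = a_1 + \cdots + a_a + 1$ is the bottom of the next block of labels, and since $x$ is a Cayley permutation this next block is precisely the block corresponding to the value $a+1$. Finally, the labels within each block are handed out in left-to-right order, so the bottom label of the $(a+1)$-block is attached to the leftmost copy of $a+1$; this yields $x_i = a+1$ and hence $x_i \in \nub(x)$, as required.

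I do not anticipate a genuine obstacle here, since the argument is essentially a direct unpacking of the definition of $\std$. The one spot where a little care is needed is ruling out that $p_i$ could label some later copy of $a$ itself, and that is exactly what Lemma~\ref{lemma_std_ascdes} is designed to do; the Cayley hypothesis is then what guarantees that the very next block corresponds to the value $a+1$ rather than to some larger value.
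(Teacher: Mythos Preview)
Your argument is correct and follows essentially the same idea as the paper's proof, just spelled out in more detail: the paper condenses the whole thing to the observation that $p_i=p_j+1$ means standardization reads $x_i$ immediately after $x_j$, and since $i<j$ this forces $x_i$ to be a leftmost copy. Your use of Lemma~\ref{lemma_std_ascdes} to rule out $x_i=x_j$, followed by the block-by-block placement of $p_j$ and $p_i$, is exactly the unpacking of that sentence.
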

\begin{proof}
The assumption $p_i=p_j+1$ says that the $\std$ maps
``reads'' $x_{i}$ immediately after~$x_j$; since $i<j$,
the entry $x_{i}$ must be a leftmost copy in $x$.
\end{proof}

In the next lemma we abuse notation by writing
$\ltrmin(x)\subseteq\ltrmin(p)$ instead of
$\{i\in[n]:x_i\in\ltrmin(x)\}\subseteq \{i\in[n]:p_i\in\ltrmin(p)\}$
(the same in the other items).

\begin{lemma}\label{lemma_std_props}
We have:
$$
\def\arraystretch{1.33}
\begin{array}{lll}
(i)&  \ltrmin(x)=\ltrmin(p);& \wltrmin(x)\supseteq\ltrmin(p).\\
(ii)& \ltrmax(x)\subseteq\ltrmax(p);& \wltrmax(x)=\ltrmax(p).\\
(iii)&\rtlmin(x)\subseteq\rtlmin(p);& \wrtlmin(x)=\rtlmin(p).\\
(iv)& \rtlmax(x)=\rtlmax(p);& \wrtlmax(x)\supseteq\rtlmax(p).
\end{array}
$$
\end{lemma}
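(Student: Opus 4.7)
The plan is to derive every containment in items (i)--(iv) directly from Lemma~\ref{lemma_std_ascdes}. That lemma already pins down the exact comparison between entries of $x$ and of $p=\std(x)$, so the proof reduces to rewriting each statistic as a condition on pairs of positions and translating.

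First I would record the following refinement of Lemma~\ref{lemma_std_ascdes}: for $i<j$, we have $p_i>p_j \iff x_i>x_j$ (strict) and $p_i<p_j \iff x_i\le x_j$ (weak). The subtle point is how standardization orients equal values: if $i<j$ and $x_i=x_j$, then $p_i<p_j$, because $\std$ assigns the smaller new label to the earlier occurrence of a repeated value. So among the three cases $x_i<x_j$, $x_i=x_j$, $x_i>x_j$, the first two are sent to $p_i<p_j$ and only the last to $p_i>p_j$. This single asymmetry is what separates the eight cases in the lemma and dictates where one gets equality versus strict inclusion.

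With this refinement, each assertion is a one-line check. For (i), $x_i\in\ltrmin(x)$ means $x_j>x_i$ for every $j<i$, which by the refinement is equivalent to $p_j>p_i$ for every $j<i$, i.e., $p_i\in\ltrmin(p)$; this gives the equality $\ltrmin(x)=\ltrmin(p)$. For the weak inclusion, if $p_i\in\ltrmin(p)$ then every earlier $p_j$ exceeds $p_i$, so every earlier $x_j$ is strictly greater than $x_i$, and in particular $x_i\in\wltrmin(x)$; the converse fails because a weak ltrmin may sit to the right of an equal value $x_j=x_i$ with $j<i$, whose image $p_j$ is smaller than $p_i$. Items (ii), (iii), (iv) are handled identically: one writes out the defining inequality, applies the refinement, and observes that on the right-based statistics the roles of strict and weak swap (since now $i<j$ compares $x_i$ with entries to its right), which is exactly why the ``weak$=$strict-of-$p$'' equality appears in $\wltrmax=\ltrmax(p)$ and $\wrtlmin=\rtlmin(p)$ but not for their opposite-side counterparts.

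There is no genuine obstacle here; the only thing to be careful about is the bookkeeping around equal values---keeping straight on which side $\std$ pushes an equality---because that single observation is what selects, in each of the eight cases, whether the inclusion is an equality or strict.
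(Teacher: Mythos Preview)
Your proposal is correct and is exactly the intended approach: the paper omits the proof of this lemma entirely (grouping it with Lemmas~\ref{lemma_std_ascdes} and~\ref{lemma_std_nub} under ``The easy proofs are omitted or just sketched''), and the natural way to fill it in is precisely to read off each containment from the equivalence $x_i\le x_j\iff p_i<p_j$ for $i<j$ stated in Lemma~\ref{lemma_std_ascdes}. Your refinement---that for $i<j$ one has $p_i>p_j\iff x_i>x_j$ while $p_i<p_j\iff x_i\le x_j$, the asymmetry coming from how $\std$ orders equal values left to right---is exactly the observation that drives all eight cases, and your case analysis is accurate.
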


From now on, we let
$$
\omega=\begin{tikzpicture}[scale=0.4, baseline=20.5pt]
\fill[NE-lines] (1,0) rectangle (2,4);
\fill[NE-lines] (0,1) rectangle (4,2);
\draw [semithick] (0.001,0.001) grid (3.999,3.999);
\filldraw (1,3) circle (6pt);
\filldraw (2,2) circle (6pt);
\filldraw (3,1) circle (6pt);
\end{tikzpicture},\quad
\zeta=
\begin{tikzpicture}[scale=0.4, baseline=20.5pt]
\fill[NE-lines] (0,0.5) rectangle (1,1.5);
\draw [semithick] (1,0.5) -- (1,3.5);
\draw [semithick] (2,0.5) -- (2,3.5);
\draw [semithick] (0,1.5) -- (3,1.5);
\draw [semithick] (0,2.5) -- (3,2.5);
\filldraw (1,2.5) circle (6pt);
\filldraw (2,1.5) circle (6pt);
\end{tikzpicture},
\quad\text{and}\quad
\Omega=\Sym(\omega,\zeta).
$$
The reader who is familiar with generalized patterns will immediately
realize that~$\omega$ is in fact a bivincular pattern
$\omega=(321,\{1\},\{1\})$. Further, a permutation has~$1$ as the
leftmost entry if and only if it avoids~$\zeta$. Indeed, the 
set~$\Omega$ could be alternatively defined as the direct
sum $\Omega=1\oplus\Sym(\omega)$.

The main goal of this section is to prove that standardization
maps bijectively the set $\Prim$ of primitive modified ascent
sequences to $\Omega$. We shall proceed as follows.
First, we show that $\std(\Modasc)\subseteq\Omega$. Then, we prove
that every permutation in~$\Omega$ is the standardization of a primitive
modified ascent sequence. Since $\Prim\subseteq\Modasc$, we get
$$
\std(\Prim)\subseteq\std(\Modasc)\subseteq\Omega\subseteq\std(\Prim),
$$
from which $\std(\Prim)=\std(\Modasc)=\Omega$ is obtained immediately.
Finally, that $\std$ maps bijectively $\Prim$ to $\Omega$
follows since Parviainen~\cite[Section~5.4]{P} proved that primitive
(modified) ascent sequences and permutations in $\Omega$ are
equinumerous. Let us expand and clarify a bit on this last part.
Parviainen showed that $|\Omega_n|=|\Prim_n|$ by slightly tweaking
a bijection~$f$ claimed to be defined from ascent sequences
to Fishburn permutations. In fact, the map~$f$ should be defined
on modified ascent sequences~\cite{BMCDK}. Specifically, $f$ is a special
instance of the \emph{Burge transpose}~\cite{Bu,CC}. The Burge transpose
acts on biwords $(u,x)$ as follows. It flips the columns of $(u,x)$
upside down; then, it sorts the columns of the resulting biword
in ascending order with respect to the top entry, breaking ties by
sorting in descending order with respect to the bottom entry.
When $x\in\Modasc$ and $u=12\cdots n$, the bottom row of the
transpose of $(u,x)$ is the Fishburn permutation associated with~$x$.
If we break ties in the opposite way, i.e. by sorting in ascending
order with respect to the bottom entry, and we restrict the transpose
to primitive sequences, then we end up with the desired bijection
between~$\Prim_n$ and~$\Omega_n$.
For instance, the sequence~$x=1312\in\Prim$ is mapped to the
permutation~$1342\in\Omega$ since the transpose of the biword
$$
\binom{1\;2\;3\;4}{1\;3\;1\;2}
\quad\text{is}\quad
\binom{1\;1\;2\;3}{1\;3\;4\;2}.
$$
Note also that $\std(1312)=1423\neq 1342$.

\begin{proposition}
We have $\std(\Modasc)\subseteq\Omega$.
\end{proposition}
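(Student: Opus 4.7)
The plan is to show both containment conditions for $\Omega=\Sym(\omega,\zeta)$: the pattern $p=\std(x)$ must start with $1$ (avoiding $\zeta$) and it must avoid the bivincular pattern $\omega=(321,\{1\},\{1\})$.

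For the first condition, I would observe that if $x\in\Modasc$ then $x_1=1$, and the standardization procedure replaces the leftmost $1$ in $x$ with the value $1$. Hence $p_1=1$, which is equivalent to saying $p$ avoids $\zeta$ (as already noted in the excerpt, the set of permutations with leading entry $1$ is exactly $\Sym(\zeta)$).

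The heart of the argument is showing $p$ avoids $\omega$, and this should drop out almost immediately from Lemmas~\ref{lemma_std_ascdes} and~\ref{lemma_std_nub} combined with Proposition~\ref{prop_modasc_prop}. First I would unpack the bivincular pattern: an occurrence of $\omega$ in $p$ is a triple of positions $i<i+1<j$ (positional adjacency from the shaded vertical strip) with $p_i>p_{i+1}>p_j$ and $p_{i+1}=p_j+1$ (value adjacency from the shaded horizontal strip). Assume toward contradiction such a triple exists. From $p_{i+1}=p_j+1$ with $i+1<j$, Lemma~\ref{lemma_std_nub} forces $x_{i+1}\in\nub(x)$. But $x\in\Modasc$, so by Proposition~\ref{prop_modasc_prop} we have $\nub(x)=\asctops(x)$, and therefore $x_i<x_{i+1}$. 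Now Lemma~\ref{lemma_std_ascdes} applied to the consecutive pair $i,i+1$ gives $x_i\le x_{i+1}\iff p_i<p_{i+1}$; since $x_i<x_{i+1}$ we conclude $p_i<p_{i+1}$, contradicting $p_i>p_{i+1}$.

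There is no real obstacle here beyond correctly decoding the adjacency conditions encoded by the shaded cells in the picture of $\omega$. Once the pattern is properly read as ``a $321$ occurrence with adjacent first two positions and consecutive smallest two values'', the two shaded strips line up precisely with the hypotheses of the two auxiliary lemmas, and the contradiction is immediate. No case analysis on the form of $x$ is needed, and primitivity plays no role — the argument applies uniformly to all of $\Modasc$.
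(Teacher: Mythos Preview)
Your proof is correct and follows essentially the same approach as the paper: both arguments use Lemma~\ref{lemma_std_nub} to conclude $x_{i+1}\in\nub(x)$ and Lemma~\ref{lemma_std_ascdes} to relate the descent $p_i>p_{i+1}$ with $x_i>x_{i+1}$, deriving a contradiction with $\nub(x)=\asctops(x)$. The only cosmetic difference is the order in which the two lemmas are invoked; the paper goes from $p_i>p_{i+1}$ to $x_{i+1}\notin\asctops(x)$ first, whereas you go from $x_{i+1}\in\asctops(x)$ to $p_i<p_{i+1}$.
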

\begin{proof}
Let $x\in\Modasc$ and let $p=\std(x)$. For a contradiction, suppose
that $p\notin\Omega$. Note that $p_1=1$ since $x_1=1$.
Thus, since $\Omega=1\oplus\Sym(\omega)$, it must be that $p$ contains
an occurrence $p_ip_{i+1}p_j$ of $\omega$, where $p_i>p_{i+1}=p_j+1$
and $i+1<j$. By Lemma~\ref{lemma_std_ascdes}, we have $x_i>x_{i+1}$,
hence $x_{i+1}\notin\asctops(x)$.
On the other hand, we have $x_{i+1}\in\nub(x)$ by
Lemma~\ref{lemma_std_nub}. Thus $\nub(x)\neq\asctops(x)$, which is a
contradiction with $x$ being a modified ascent sequence.
\end{proof}

The proof that $\Omega\subseteq\std(\Prim)$ relies upon a geometric
decomposition of permutations in~$\Omega$ that stems from the next lemma.

\begin{lemma}\label{lemma_chains}
Let $p\in\Omega$. If $p_i\notin\asctops(p)$ and $p_j=p_i-1$,
then $j<i$.
\end{lemma}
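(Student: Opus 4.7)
The plan is to argue by contradiction, assuming $j>i$ and producing an occurrence of the forbidden bivincular pattern $\omega$ in~$p$. The first observation is that the hypothesis $p_i\notin\asctops(p)$ forces $i\ge 2$, since by definition $(1,p_1)$ always lies in $\asctops(p)$; moreover, because $p$ is a permutation, the only way for $p_i$ to fail to be an ascent top is that $p_{i-1}>p_i$. So a descent $p_{i-1}>p_i$ sits immediately to the left of~$p_i$, for free.

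I would then combine this descent with the entry $p_j=p_i-1$ supplied by the hypothesis. Assuming toward contradiction that $j>i$, consider the three positions $i-1<i<j$ carrying the values $p_{i-1},\,p_i,\,p_j$. The inequalities $p_{i-1}>p_i$ and $p_j=p_i-1<p_i$ already show that these values form a $321$ pattern. It remains to check the two ``bivincular'' constraints packaged into~$\omega$: the vertical shaded strip demands that the first two positions of the occurrence be adjacent, and this holds because they are $i-1$ and $i$; the horizontal shaded strip demands that the two smallest values be adjacent in value, and this holds because $p_i-p_j=1$.

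This exhibits an occurrence of $\omega$ in $p$, contradicting $p\in\Omega\subseteq\Sym(\omega)$, so $j<i$ as required. The only point that needs a moment of care is the boundary case $j=i+1$, where the triple collapses to $(i-1,i,i+1)$: this is still a legitimate occurrence of $\omega$, since the pattern only requires three distinct positions with the first two adjacent and no further intervening column, and here the third column is immediately next to the second (hence no intervening entries at all). With that verified, the argument is a short two-line check rather than a real calculation, and I do not anticipate any substantive obstacle.
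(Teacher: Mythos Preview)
Your argument is correct and is exactly the paper's proof: assume $j>i$ and observe that $p_{i-1}p_ip_j$ is an occurrence of~$\omega$, using that $p_i\notin\asctops(p)$ forces $i\ge 2$ and $p_{i-1}>p_i$. The paper states this in a single line; your additional discussion of the bivincular constraints and the $j=i+1$ case is sound but not needed.
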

\begin{proof}
If it were $j>i$, then $p_{i-1}p_ip_j$ would be an occurrence
of~$\omega$.
\end{proof}

Let $p\in\Omega$ and let $\asctops(p)=\{p_{k_1},\dots,p_{k_m}\}$,
where $m\ge 1$ and $p_{k_1}<p_{k_2}<\cdots<p_{k_m}$.
By Lemma~\ref{lemma_chains}, every entry that is not an ascent top
is located to the right of the next smaller entry in $p$. More
specifically, all the entries whose value is included bewteen two 
consecutive ascent tops, say $p_{k_i}$ and $p_{k_{i+1}}$,
appear in increasing order from left to right in $p$, and to the
right of $p_{k_i}$. This property allows us to partition $p$ in $m$ \emph{chains} of the form
$$
(p_{k_i},p_{k_i}+1,p_{k_i}+2,\dots,p_{k_{i+1}}-1),
$$
where the only ascent top in every chain is the first (and
smallest) element, and all the elements are consecutive in value and
appear in increasing order in $p$. An example of this construction
is depicted in Figure~\ref{figure_std_to_prim}.

\begin{proposition}\label{prop_perm_to_prim}
For each $p\in\Omega$, there is a primitive modified ascent
sequence $x$ such that $\std(x)=p$. In other words, we have
$\Omega\subseteq\std(\Prim)$.
\end{proposition}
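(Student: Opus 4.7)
The plan is to invert the chain decomposition established just above the statement. Given $p\in\Omega$ with ascent tops $p_{k_1}<\cdots<p_{k_m}$ and corresponding chains $C_1,\ldots,C_m$ (so that the positions belonging to $C_j$ are exactly those $i$ with $p_{k_j}\le p_i\le p_{k_{j+1}}-1$, using $p_{k_{m+1}}=n+1$), define $x=x_1\cdots x_n$ by letting $x_i=j$ whenever position~$i$ belongs to $C_j$. Then $x$ is a Cayley permutation with $\max(x)=m$, and since $p_1=1$ is always an ascent top we have $p_{k_1}=1$ and $x_1=1$. It remains to show that $\std(x)=p$ and that $x\in\Prim$.

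For the first claim, the number $a_j$ of copies of the letter $j$ in $x$ is the size of $C_j$, namely $p_{k_{j+1}}-p_{k_j}$; consequently $a_1+\cdots+a_{j-1}+1=p_{k_j}$. Standardization therefore replaces the leftmost copy of $j$ in $x$ with $p_{k_j}$, the next with $p_{k_j}+1$, and so on up to $p_{k_{j+1}}-1$. By Lemma~\ref{lemma_chains} and the chain decomposition, the entries of $C_j$ appear in strictly increasing order of value as we read $p$ from left to right and take precisely these values, so standardization restores $p$ at every position occupied by a copy of $j$. Doing this for every $j$ gives $\std(x)=p$.

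For the second claim, suppose $x_i=x_{i+1}=j$. Then $p_i$ and $p_{i+1}$ both lie in $C_j$, so $p_i<p_{i+1}$, forcing $p_{i+1}$ to be an ascent top of $p$; but the unique ascent top in $C_j$ is its minimum $p_{k_j}$, so we would need $p_{i+1}=p_{k_j}\le p_i$, a contradiction. Hence $x$ has no flat steps. It remains to verify that $\asctops(x)=\nub(x)$. For $i>1$, we have $x_{i-1}<x_i$ iff $p_{i-1}$ lies in an earlier chain than $p_i$. If this holds with $p_i\in C_j$, then $p_{i-1}\le p_{k_j}-1<p_i$ makes $p_i$ an ascent top of $p$, so $p_i=p_{k_j}$ and thus $x_i$ is a leftmost copy of $j$. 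Conversely, if $p_i=p_{k_j}$ then $p_i$ is an ascent top of $p$, so $p_{i-1}<p_{k_j}$ lies in an earlier chain and $x_{i-1}<x_i$. Adding position~$1$ to both sets (which it always belongs to) yields $\asctops(x)=\nub(x)$, so $x\in\Modasc$, and combined with the absence of flat steps, $x\in\Prim$. The main conceptual work is really packed into the chain-decomposition statement preceding the proposition; once it is available, the naturally defined $x$ merely reads it backwards.
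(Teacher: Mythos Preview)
Your proof is correct and follows essentially the same approach as the paper: the paper defines~$x$ by a geometric ``gravity'' construction (entries fall onto the half-line through the nearest lower ascent top, then rescale), which it immediately notes is equivalent to setting $x_i=j$ when $p_i$ lies in the $j$th chain---exactly your definition. Your verification of $\std(x)=p$, primitivity, and $\asctops(x)=\nub(x)$ mirrors the paper's three bullet points, though you handle the details a bit more explicitly where the paper leaves some to the reader.
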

\begin{proof}
Let $p\in\Omega$. We determine a modified ascent sequence~$x$ such
that~$\std(x)=p$. First, we define~$x$ with a geometric construction
illustrated in Figure~\ref{figure_std_to_prim}.
For each ascent top $p_i\in\asctops(p)$, draw a horizontal
half-line starting from $p_i$ and going to the right. Then, let
each other entry of~$p$ fall under the action of gravity until
it hits one of the horizontal lines defined before. Finally, rescale
the resulting word (by ignoring eventual vertical gaps created at
the previous step) in order to obtain a Cayley permutation~$x$.
More formally, let $y$ be the string obtained from $p$ by letting
$$
y_i=\max(U_i),
\quad\text{where}\quad
U_i=\{p_j:\; j<i,\;p_j<p_i,\;p_j\in\asctops(p)\},
$$
for each $p_i\notin\asctops(p)$, and $y_i=p_i$ otherwise. Finally,
let $x$ be the only Cayley permutation order isomorphic to $y$.
Note that every $p_i\notin\asctops(p)$ necessarily hits some half-line
since there is a half-line starting from $p_1=1$; equivalently,
the set $U_i$ is not empty since $p_1=1\in\asctops(p)$.
The construction of~$x$ can be alternatively described in terms of
the chains of $p$ (defined just before this proposition):
all the entries in the same chain fall at the same level as the
leftmost element of the chain, which is the only ascent top of the
chain, as well as its smallest entry.
The equivalence of the two definitions is omitted.
To complete the proof, we need to show that $x\in\Modasc$, $x$
contains no flat steps, and $\std(x)=p$.
We just sketch the proof of these claims, leaving some technicalities
to the reader.
\begin{itemize}
\item To see that $x\in\Modasc$, observe that $p_i\in\asctops(p)$
if and only if $x_i\in\asctops(x)$. Now, if
$p_i\in\asctops(p)$, then $x_i\in\nub(x)$ as well. On the other hand,
if $p_i\notin\asctops(p)$, then~$p_i$ falls at the same level as
some $p_j\in\asctops(p)$, with $j<i$, and thus $x_i\notin\nub(x)$.
Hence, we have $\asctops(x)=\nub(x)$, and $x\in\Modasc$ follows.
Note that we did not use that $p$ avoids $\omega$ here.
\item Next, we show that the avoidance of $\omega$ guarantees
that $x$ contains no flat steps. 
For a contradiction, suppose that $x_i=x_{i+1}$ is a flat step in $x$.
Note that it must be $p_{i+1}\notin\asctops(p)$, or else $p_{i+1}$
would not fall. Thus we have $p_i>p_{i+1}$ and, since $x_i=x_{i+1}$,
we have $p_i\notin\asctops(p)$ as well. Since $p_i$ and $p_{i+1}$ fall
at the same level, they must belong to the same chain. But this
is impossible since entries in a chain appear in increasing
order in $p$ and $p_i>p_{i+1}$.
\item To see that $\std(x)=p$, observe that the entries
of $x$ that are equal to $1$ correspond to the chain of $p$ whose
smallest entry is $p_1=1$. As observed after Lemma~\ref{lemma_chains},
such chain is $(p_1,2,3,\dots,\ell_1)$, for some $\ell_1\ge 1$.
Further, standardization sets the $i$th copy of $1$ equal to $i$,
matching the desired value of each entry in $p$.
The same argument holds for the remaining chains of $p$, and our
claim follows.
\end{itemize}
This concludes the proof.
\end{proof}

\begin{figure}
\centering
\begin{tikzpicture}[scale=0.4, baseline=20pt]
	\draw[](0,0)--(2,11)--(4,12)--(6,14)--(8,2)--(10,5)--(12,3)--(14,7)--(16,13)--(18,8)--(20,6)--(22,4)--(24,10)--(26,9);
	\draw[dotted] (0,0)--(26.5,0);
	\draw[dotted] (2,11)--(26.5,11);
	\draw[dotted] (4,12)--(26.5,12);
	\draw[dotted] (6,14)--(26.5,14);
	\draw[dotted] (10,5)--(26.5,5);
	\draw[dotted] (14,7)--(26.5,7);
	\draw[dotted] (16,13)--(26.5,13);
	\draw[dotted] (24,10)--(26.5,10);
	\node[above left] at (0,0){$1$};
	\filldraw (0,0) circle (5pt);
	\node[above left] at (2,11){$11$};
	\filldraw (2,11) circle (5pt);
	\node[above left] at (4,12){$12$};
	\filldraw (4,12) circle (5pt);
	\node[above] at (6,14){$14$};
	\filldraw (6,14) circle (5pt);
	\node[left] at (8,2){$2$};
	\filldraw (8,2) circle (5pt);
	\node[above] at (10,5){$5$};
	\filldraw (10,5) circle (5pt);
	\node[right] at (12,3){$3$};
	\filldraw (12,3) circle (5pt);
	\node[left] at (14,7){$7$};
	\filldraw (14,7) circle (5pt);
	\node[above] at (16,13){$13$};
	\filldraw (16,13) circle (5pt);
	\node[above right] at (18,8){$8$};
	\filldraw (18,8) circle (5pt);
	\node[above right] at (20,6){$6$};
	\filldraw (20,6) circle (5pt);
	\node[right] at (22,4){$4$};
	\filldraw (22,4) circle (5pt);
	\node[above] at (24,10){$10$};
	\filldraw (24,10) circle (5pt);
	\node[above] at (26,9){$9$};
	\filldraw (26,9) circle (5pt);
	\draw (8,0) circle (5pt);
	\draw (12,0) circle (5pt);
	\draw (18,7) circle (5pt);
	\draw (20,5) circle (5pt);
	\draw (22,0) circle (5pt);
	\draw (26,7) circle (5pt);
	\draw[<->,thick,blue] (8,1.9)--(8,0.1);
	\draw[<->,thick,blue] (12,2.9)--(12,0.1);
	\draw[<->,thick,blue] (18,7.9)--(18,7.1);
	\draw[<->,thick,blue] (20,5.9)--(20,5.1);
	\draw[<->,thick,blue] (22,3.9)--(22,0.1);
	\draw[<->,thick,blue] (26,8.9)--(26,7.1);
\end{tikzpicture}
\caption{The primitive modified ascent sequence $x=15681213732143$
associated with the permutation $p=1,11,12,14,2,5,3,7,13,8,6,4,10,9$
in $\Omega$. Note that $\std(x)=p$. The chains of~$p$ of length two
or more are $(1,2,3,4)$, $(5,6)$, $(7,8,9)$.}\label{figure_std_to_prim}
\end{figure}
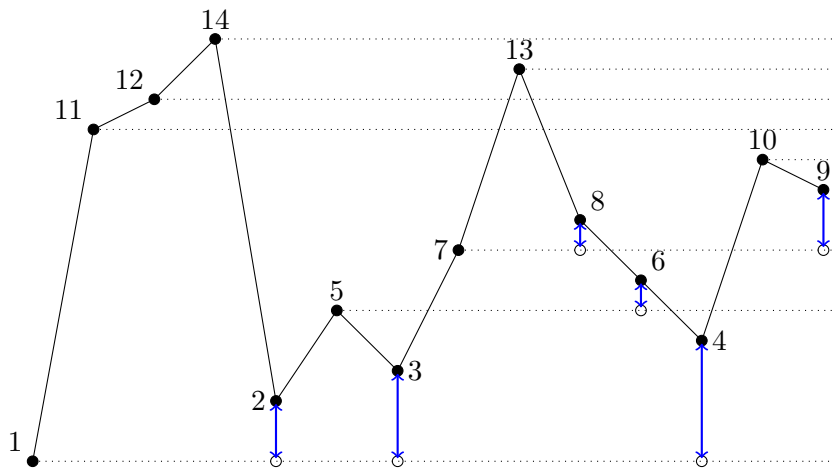

\begin{corollary}
Standardization is a size-preserving bijection from $\Prim$ to $\Omega$.
\end{corollary}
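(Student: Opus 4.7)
The plan is to stitch together the three ingredients already laid out in the section. First, the previously proved inclusion $\std(\Modasc)\subseteq\Omega$ gives, by restriction, $\std(\Prim)\subseteq\Omega$ since $\Prim\subseteq\Modasc$. Second, Proposition~\ref{prop_perm_to_prim} supplies the reverse inclusion $\Omega\subseteq\std(\Prim)$. Chaining these,
$$
\std(\Prim)\subseteq\std(\Modasc)\subseteq\Omega\subseteq\std(\Prim),
$$
so all these sets coincide; in particular $\std$ restricts to a surjection $\Prim\twoheadrightarrow\Omega$. Size-preservation is automatic because standardization never changes the length of a word, so this surjection restricts further to $\Prim_n\twoheadrightarrow\Omega_n$ for every~$n$.

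It remains to upgrade surjectivity to bijectivity. Here I would simply invoke Parviainen's result~\cite[Section~5.4]{P}, quoted just before Proposition~\ref{prop_perm_to_prim}, which states that $|\Prim_n|=|\Omega_n|$ for all~$n\ge 0$. A surjection between two finite sets of equal cardinality is a bijection, so $\std\colon\Prim_n\to\Omega_n$ is bijective for every~$n$, and assembling over $n$ yields the claim.

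The only potentially subtle point is that one might be tempted to prove injectivity directly, by arguing that a primitive modified ascent sequence~$x$ is reconstructible from $p=\std(x)$ via the chain decomposition of~$p$. This is indeed the content of the geometric construction in the proof of Proposition~\ref{prop_perm_to_prim}: different $x\in\Prim$ must produce different chain structures and hence different standardizations. However, since the equinumeration has already been cited, there is no need to spell this out a second time, and the cardinality argument is cleaner. Consequently the corollary needs only a two-line proof combining the surjection with Parviainen's count.
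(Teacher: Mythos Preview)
Your proposal is correct and mirrors the paper's own argument exactly: the paper outlines the same chain of inclusions $\std(\Prim)\subseteq\std(\Modasc)\subseteq\Omega\subseteq\std(\Prim)$ in the paragraph preceding the two propositions, and then appeals to Parviainen's equinumeration $|\Prim_n|=|\Omega_n|$ to turn the resulting surjection into a bijection. Your aside about recovering injectivity directly from the chain construction is a nice observation but, as you note, unnecessary given the cited count.
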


Schensted~\cite{Sc} observed that the decreasing subsequences of~$x$
and~$p=\std(x)$ are in one-to-one correspondence, while the increasing
subsequences of~$p$ are in one-to-one correspondence with the weakly increasing subsequences of~$x$. Roughly speaking, the reason is that
the behavior of the standaridization map on any subsequence of~$x$ is
not affected by the remaining entries of~$x$.
Specifically, if $x_{i_1}\cdots x_{i_k}$ is an occurrence of~$y$
in~$x$, then $p_{i_1}\cdots p_{i_k}$ is an occurrence of~$\std(y)$
in~$p$. Conversely, if $p_{i_1}\cdots p_{i_k}\simeq q$, then
$\std(x_{i_1}\cdots x_{i_k})=q$ as well. The following theorem of
transport of patterns from~$\Omega$ to~$\Prim$ is obtained immediately.

\begin{theorem}\label{thm_transport}
Given $p\in\Sym$, let $[p]=\{x\in\Cay:\std(x)=p\}$. Then standardization
is a size-preserving bijection from $\Prim[p]$ to $\Omega(p)$.
\end{theorem}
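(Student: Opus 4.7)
My plan is to deduce the theorem by combining the preceding Corollary, which states that $\std$ is a size-preserving bijection from $\Prim$ to $\Omega$, with the two pattern-transport observations highlighted just before the theorem statement: first, if $x_{i_1}\cdots x_{i_k}$ is an occurrence of some Cayley permutation $y$ in $x$, then $p_{i_1}\cdots p_{i_k}$ is an occurrence of $\std(y)$ in $p=\std(x)$; and second, if $p_{i_1}\cdots p_{i_k}\simeq q$, then $\std(x_{i_1}\cdots x_{i_k})=q$.

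Granted those ingredients, it suffices to show that the bijection $\std\colon\Prim\to\Omega$ restricts to a bijection $\Prim[p]\to\Omega(p)$; here $\Prim[p]$ denotes the primitive modified ascent sequences that avoid every Cayley permutation in the preimage $[p]$. I would prove this by establishing two inclusions.

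For the forward inclusion $\std(\Prim[p])\subseteq\Omega(p)$, I would fix $x\in\Prim[p]$ and suppose for contradiction that $\std(x)$ contains an occurrence $p_{i_1}\cdots p_{i_k}$ of $p$. The second transport property then forces $\std(x_{i_1}\cdots x_{i_k})=p$, so the Cayley permutation order isomorphic to $x_{i_1}\cdots x_{i_k}$ lies in $[p]$ and occurs as a pattern of $x$, contradicting $x\in\Prim[p]$. For the reverse inclusion, I would start with any $q\in\Omega(p)$, apply the Corollary to produce the unique $x\in\Prim$ with $\std(x)=q$, and observe that any occurrence of some $y\in[p]$ inside $x$ would, by the first transport property, translate into an occurrence of $\std(y)=p$ inside $q$, contradicting $q\in\Omega(p)$.

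Because the heavy lifting is already done, both in the preceding Corollary and in Schensted's observation on how $\std$ behaves under restriction to subsequences, I do not anticipate a genuine obstacle here; the theorem is essentially a formal packaging of those results. The only subtle point, in my view, is to be careful about the distinction between a subsequence $x_{i_1}\cdots x_{i_k}$ of $x$ and the unique Cayley permutation order isomorphic to it, so that membership in the (unnormalized) preimage $[p]$ is interpreted correctly.
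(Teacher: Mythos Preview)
Your proposal is correct and follows essentially the same approach as the paper. In fact, the paper does not write out a formal proof at all: it records the two Schensted-type transport properties in the paragraph preceding the theorem and then declares that the result ``is obtained immediately''; you have simply filled in the two inclusions that make this immediate consequence explicit.
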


Theorem~\ref{thm_transport} is analogous to the transport
theorem between Fishburn permutations and modified ascent
sequences~\cite[Thm.~5.1]{CC}. Standardization plays the role
of the Burge transpose, and the set $[p]$ replaces the Fishburn
basis. As we will see later, by pairing Theorem~\ref{thm_transport}
with Proposition~\ref{prop_prim_to_full} we are sometimes able to
rephrase the original problem of counting $\Modasc(y)$ in terms of
permutations, making our task much easier. Examples where this
approach is fruitful can be found in Section~\ref{section_213_231_321}
and Section~\ref{section_221_2321}.

\section{Easy patterns}\label{section_easy}

As a warm up for the next sections, we solve some simple
patterns of short length.

\subsection{Patterns of length two}\label{sec_patt_len2}

The only modified ascent sequence of length~$n$ that avoids~$11$ is
the strictly increasing sequence $x=12\dots n$. Similarly,
there is only one sequence that avoids~$12$, namely the sequence
containing all ones $x=11\cdots 1$.

A modified ascent sequence avoids~$21$ if and only if it is a
weakly increasing Cayley permutation~\cite{CC}, and the number
of such sequences of length~$n$ is $2^{n-1}$. Further~\cite{Ce2},
we have $\Modasc(21)=\Modasc(121)$.

\subsection{Pattern~$112$}\label{sec_112}

Let $x\in\Modasc(112)$. Then $x=1y1^{k_1}$, for some $k_1\ge 0$,
where each entry in $y$ is strictly greater than~$1$.
Indeed, no entry greater than or equal to two is allowed to appear
to the right of the second copy of~$1$.
Further, by Proposition~\ref{prop_modasc_prop}, the subsequence~$y$
is order isomorphic to some $\tilde{y}\in\Modasc(112)$; namely,
$y$ is obtained by increasing by one each entry of $\tilde{y}$.
Iterating the same argument on $y$ yields a ``left pyramid'' structure:
$$
x=12\cdots m m^{k_m} \cdots 2^{k_2}1^{k_1},
\qquad
\begin{tikzpicture}[scale=0.25, baseline=20pt]
\draw[](0,0)--(2.5,5)--(3.5,5);
\draw[](4.5,4)--(5.5,4);
\draw[](6.5,3)--(7.5,3);
\draw[](8.5,2)--(9.5,2);
\draw[](10.5,1)--(11.5,1);
\draw[](12.5,0)--(13.5,0);
\draw[dotted](3.5,5)--(4.5,4);
\draw[dotted](5.5,4)--(6.5,3);
\draw[dotted](7.5,3)--(8.5,2);
\draw[dotted](9.5,2)--(10.5,1);
\draw[dotted](11.5,1)--(12.5,0);
\filldraw (0,0) circle (5pt);
\filldraw (0.5,1) circle (5pt);
\filldraw (1,2) circle (5pt);
\filldraw (1.5,3) circle (5pt);
\filldraw (2,4) circle (5pt);
\filldraw (2.5,5) circle (5pt);
\end{tikzpicture}
$$
where $m=\max(x)$ and $k_i\ge 0$, for $i=1,\dots,m$. Therefore,
any $x\in\Modasc_n(112)$ is uniquely determined by a tuple
$(k_1+1,k_2+1,\dots,k_m+1)$ recording the multiplicity of its values;
that is, by a composition of~$n$ (with $m=\max(x)$ parts).
Finally, the number of compositions of~$n$ is well known to be
equal to $2^{n-1}$.

With a little more effort, we can enumerate $\Prim(112)$.
A $112$-avoiding modified ascent sequence as above is primitive
if and only if $k_m=0$ and $k_i\in\{0,1\}$ for each $i<m$.
In other words, by ignoring the last entry $k_m+1=1$ in the tuple
$(k_1+1,k_2+1,\dots,k_m+1)$, we obtain a composition of $n-1$ with
no parts greater than two. A quick look in the OEIS~\cite{Sl} reveals
that the number of such compositions of $n-1$ is given by the $n$th
Fibonacci number.

Computing the number of primitive sequences in the cases discussed so
far is a fairly easy task. The interested reader is invited to
check Table~\ref{table_patts} to see the resulting sequences.

\subsection{Pattern~$122$}\label{section_122}

Let $x\in\Modasc_n(122)$. Since $x_1=1$, every integer between~$2$
and $\max(x)$ appears exactly once in~$x$.
Furthermore, all the entries between two copies of~$1$ appear in
increasing order due to the equality~$\nub(x)=\asctops(x)$.
In other words, if $x$ contains~$k$ copies of~$1$, then $x$
decomposes as
$$
x=1B_1\;1B_2\;\dots\;1B_k,
$$
where entries in each block $B_i$ are greater than or equal to~$2$,
and~$B_i$ is strictly increasing (possibly empty). Thus,
$$
|\{x\in\Modasc_n(122):\text{$x$ contains $k$ copies of $1$}\}|=
k^{n-k}.
$$
Indeed, a sequence~$x$ as above is determined by choosing,
for each of the $n-k$ entries greater than~$1$, the index
$i\in\{1,2,\dots,k\}$ of its block~$B_i$. Summing over~$k$, we get
$$
|\Modasc_n(122)|=\sum_{k=1}^nk^{n-k}.
$$
According to A026898~\cite{Sl}, the size of $\Modasc_n(122)$ is equal
to the number of set partitions of~$[n]$ whose minima of blocks form
an interval. A simple bijective proof goes as follows.
Given $x\in\Modasc_n(122)$, insert a block separator before every copy
of $1$ (ignoring the leftmost one), and compute $\std(x)$ as usual.
The result is a set partition whose minima of blocks correspond
to the copies of $1$ in $x$ . For instance, if $x=134112561$:
$$
x=134|1|1256|1
\;\longmapsto\;
\std(x)=167|2|3589|4=\{1,6,7\}\{2\}\{3,5,8,9\}\{4\}.
$$
We were not able to find a reference for the {\ogf} given
in A026898, and we wish to fill this gap below.
Recall that
$$
\Modasc_{122}(t)=\sum_{n\ge 0}|\Modasc_n(122)|t^n
$$
denotes the {\ogf} of $122$-avoiding modified ascent sequences.
An {\ogf} for sequences in $\Modasc_{n}(122)$ that contain exactly~$k$
copies of~$1$ is
$$
t^k\sum_{m\ge 0}k^mt^m=\frac{t^k}{1-kt},
$$
where $n=m+k$. Summing over~$k$, we obtain
$$
\Modasc_{122}(t)=\sum_{k\ge 0}\frac{t^k}{1-kt}.
$$
Finally, an {\ogf} for the sequence A026898, which is shifted by
one position compared to $\Modasc_{122}(t)$, is
$$
\frac{1}{t}(\Modasc_{122}(t)-1)=\sum_{k\ge 0}\frac{t^k}{1-(k+1)t},
$$
which matches the one given in the OEIS.

To end this section, we wish to enumerate $\Prim(122)$,
something we will use in Section~\ref{section_1232}.
Let $n\ge 1$ and let $x\in\Prim_n(122)$. Once again, we shall
decompose $x$ by highlighting the copies of~$1$ it contains. The
only difference compared to the general case, is that only the last
block is allowed to be empty since $x$ is primitive (and any other
empty block would result in two consecutive copies of~$1$). Thus,
if $x\in\Prim_n(122)$ contains $k$ copies of~$1$, we have either
$$
x=1B_1\;\dots\;1B_{k-1}\;1B_k
\quad\text{or}\quad
x=1B_1\;\dots\;1B_{k-1}\;1,
$$
according to whether or not $B_k$ is empty.
Clearly, the former are (in bijection with) ordered set partions
of size $n-k$ with $k$ blocks, which are counted by $k!S(n-k,k)$;
the latter are ordered set partitions of size $n-k$ with $k-1$ blocks,
counted by $(k-1)!S(n-k,k-1)$. Here, we denote by $S(n,k)$ the $(n,k)$th
Stirling number of the second kind. Finally, for $n\ge 1$ we obtain
\begin{align*}
|\Prim_n(122)|&=\sum_{k\ge 1}\bigl[k!S(n-k,k)+(k-1)!S(n-k,k-1)\bigr]\\
&=\sum_{k\ge 1}(k-1)!\bigl(kS(n-k,k)+S(n-k,k-1)\bigr)\\
&=\sum_{k\ge 1}(k-1)!S(n-k+1,k).
\end{align*}
For the rest of this section, let
\begin{align*}
F(t)&=\sum_{n\ge 0}\sum_{k\ge 0}k!S(n-k,k)t^{n},
\shortintertext{so that}
\Prim_{122}(t)&=1+\sum_{n\ge 1}|\Prim_n(122)|t^n\\
&=1+\sum_{n\ge 1}\sum_{k\ge 0}k!S(n-k,k)t^n+
\sum_{n\ge 1}\sum_{k\ge 1}(k-1)!S(n-k,k-1)t^n\\
&=F(t)+\sum_{n\ge 1}\sum_{j\ge 0}j!S(n-j-1,j)t^{n}\\
&=F(t)+t\sum_{m\ge 0}\sum_{j\ge 0}j!S(m-j,j)t^{m}\\
&=(1+t)F(t).
\end{align*}
A shift by one position of $\Prim_{122}(t)$ is recorded as
A229046. Cao {\etal}~\cite{CJL} showed that its
$n$-th term---i.e. $|\Prim_{n+1}(122)|$---is equal to the number of
inversion sequences of length~$n$ avoiding the triple of binary
relations~$(-,-,=)$; or, equivalently, avoiding the patterns~$111$,
$121$ and~$212$. A bijection between the two structures remains
to be found. Similarly, a shift of $F(t)$ gives A105795.
Each of these two entries in the OEIS contains (at least) an {\ogf}
for the corresponding sequence, but we could not find any formal proof.
We bridge this gap below, starting from $F(t)$.
Stanley~\cite[Eq.~(1.94)]{St} proved the following two equations
involving the Stirling numbers of the second kind:
\begin{align*}
k!S(n,k)&=\sum_{i\ge 0}(-1)^{k-i}\binom{k}{i}i^n; &(1.94)(a)\\
\sum_{m\ge 0}S(m,k)t^m&=\frac{t^k}{(1-t)(1-2t)\cdots(1-kt)}. &(1.94)(c)
\end{align*}
Now,
\begin{align*}
F(t)&=\sum_{n\ge 0}\sum_{k\ge 0}k!S(n-k,k)t^{n}\\
&=\sum_{m\ge 0}\sum_{k\ge 0}k!S(m,k)t^{m+k} & m=n-k\\
&=\sum_{k\ge 0}k!t^k\frac{t^k}{(1-t)(1-2t)\cdots(1-kt)} & \text{By }(1.94)(c)\\
&=\sum_{k\ge 0}\prod_{j=1}^k\frac{jt^2}{1-jt}.
\end{align*}
Alternatively,
\begin{align*}
F(t)&=\sum_{m\ge 0}\sum_{k\ge 0}k!S(m,k)t^{m+k}\\
&=\sum_{m\ge 0}\sum_{k\ge 0}\sum_{i\ge 0}(-1)^{k-i}\binom{k}{i}i^mt^{m+k}
& \text{By }(1.94)(a)\\
&=\sum_{i\ge 0}\sum_{k\ge 0}(-1)^{k-i}\binom{k}{i}t^k\left(\sum_{m\ge 0}i^mt^m\right)\\
&=\sum_{i\ge 0}\frac{t^i}{1-it}\left(\sum_{k\ge 0}(-1)^{k-i}\binom{k}{i}t^{k-i}\right)\\
&=\sum_{i\ge 0}\frac{t^i}{(1-it)(1+t)^{i+1}},
\end{align*}
where the last step follows from the binomial theorem:
\begin{align*}
(1+t)^{-i-1}&=\sum_{j\ge 0}\binom{-i-1}{j}t^j\\
&=\sum_{j\ge 0}(-1)^j\binom{i+1+j-1}{j}t^j\\
&=\sum_{k\ge 0}(-1)^{k-i}\binom{k}{k-i}t^{k-i} & k=i+j\\
&=\sum_{k\ge 0}(-1)^{k-i}\binom{k}{i}t^{k-i}.
\end{align*}
We have thus proved the following proposition.
\begin{proposition}\label{prop_ogf_122}
Let $F(t)=\sum_{n\ge 0}\sum_{k\ge 0}k!S(n-k,k)t^{n}$. Then
$$
F(t)=\sum_{k\ge 0}\prod_{j=1}^k\frac{jt^2}{1-jt}
=\sum_{i\ge 0}\frac{t^i}{(1-it)(1+t)^{i+1}}.
$$
\end{proposition}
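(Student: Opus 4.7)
The plan is to prove both closed forms by first reindexing the defining double sum with $m = n-k$ so that
$$F(t) = \sum_{m\ge 0}\sum_{k\ge 0} k!\,S(m,k)\, t^{m+k},$$
and then applying the two Stirling identities (1.94)(a) and (1.94)(c) from Stanley's EC1 that are recalled in the paragraph preceding the proposition.

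For the product formula, I would apply (1.94)(c) to the inner sum in $m$, replacing $\sum_{m\ge 0} S(m,k) t^m$ by $t^k/\prod_{j=1}^k(1-jt)$. This collapses $F(t)$ to a single sum $\sum_{k\ge 0} k!\, t^{2k}/\prod_{j=1}^k(1-jt)$. Distributing $k! = \prod_{j=1}^k j$ and one factor of $t^2$ into the product immediately yields $\sum_{k\ge 0}\prod_{j=1}^k jt^2/(1-jt)$, which is the first claimed form.

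For the second closed form, I would instead substitute (1.94)(a) for $k!\,S(m,k)$, producing a triple sum indexed by $i,k,m$. After swapping summation order (harmless at the level of formal power series) and evaluating the inner geometric series $\sum_{m\ge 0}(it)^m = 1/(1-it)$, factoring $t^i$ leaves
$$F(t) = \sum_{i\ge 0}\frac{t^i}{1-it}\sum_{k\ge 0}(-1)^{k-i}\binom{k}{i} t^{k-i}.$$
The final step is to identify the inner sum over $k$ with $(1+t)^{-i-1}$: setting $k = i+j$ and using $\binom{-i-1}{j} = (-1)^j\binom{i+j}{j}$ in the generalized binomial theorem matches the alternating series term-by-term.

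The main obstacle here is purely bookkeeping. The only places I expect to have to slow down are the order-of-summation swap in the triple sum and the final binomial-theorem identification, where the sign $(-1)^{k-i}$, the binomial coefficient $\binom{k}{i}$, and the power $t^{k-i}$ all need to be matched against $\binom{-i-1}{j}t^j$ after the reindexing $k = i+j$. Beyond that, both derivations are direct consequences of the two Stirling identities already stated in the paper, with no additional combinatorial input required.
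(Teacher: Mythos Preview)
Your proposal is correct and follows essentially the same route as the paper: the same reindexing $m=n-k$, the same use of (1.94)(c) for the product form, and the same use of (1.94)(a) followed by the geometric-series and negative-binomial identifications for the second form. There is nothing to add.
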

Two {\ogf}s for $\Prim_{122}(t)$ are obtained immediately
as $\Prim_{122}(t)=(1+t)F(t)$. An {\ogf} for A229046 is
$$
G(t)=\frac{1}{t}(\Prim_{122}(t)-1)=F(t)+\frac{1}{t}(F(t)-1).
$$
Using Proposition~\ref{prop_ogf_122}, we compute
$$
G(t)=\sum_{k\ge 0}\frac{1}{1-(k+1)t}\prod_{j=1}^k\frac{jt^2}{1-jt},
$$
which agrees with the {\ogf} given in A229046.

\section{Primitive patterns}\label{section_prim_patts}

This whole section is devoted to the solution of primitive patterns.

\subsection{Pattern~$1232$}\label{section_1232}

We start by enumerating~$\Modasc(1232)$. The key is
the following lemma.

\begin{lemma}\label{lemma_1232}
For each $n\ge 0$,
$$
\Prim_n(122)=\Prim_n(1232).
$$
\end{lemma}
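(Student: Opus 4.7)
The plan is to prove the stronger statement that the two sets are literally equal, by showing containment of patterns in each direction. One inclusion is pattern-generic: every occurrence of $1232$ in positions $p_1<p_2<p_3<p_4$ already yields an occurrence of $122$ at positions $p_1<p_2<p_4$, so avoidance of $122$ implies avoidance of $1232$ for any word. In particular $\Prim_n(122)\subseteq\Prim_n(1232)$.

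For the reverse inclusion I would argue by contrapositive: assume $x\in\Prim_n$ contains $122$ at positions $i<j<k$, so $x_i<x_j=x_k$, and produce an occurrence of $1232$. The idea is to upgrade this $122$ occurrence to a $1232$ occurrence by adding the entry $x_{k-1}$ immediately preceding the second copy. Since $x_k=x_j$ with $j<k$, the entry $x_k$ is not a leftmost copy; by Proposition~\ref{prop_modasc_prop} it is therefore not an ascent top, which gives $x_{k-1}\ge x_k$. Primitivity rules out equality, so in fact $x_{k-1}>x_k$. Primitivity likewise forces $j<k-1$, for otherwise $x_j=x_{j+1}=x_k$ would be a flat step. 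Hence $i<j<k-1<k$ and $x_i<x_j=x_k<x_{k-1}$, i.e.\ the subsequence $x_ix_jx_{k-1}x_k$ is order isomorphic to $1232$.

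Combining the two inclusions gives $\Prim_n(122)=\Prim_n(1232)$. There is no serious obstacle: the only step that requires genuine input is the observation that the entry directly to the left of the repeated value $x_k$ must be strictly larger, which is precisely the content of the $\asctops=\nub$ characterisation of $\Modasc$ together with the absence of flat steps. Once this is noted, the extra index $k-1$ materialises for free and completes the pattern.
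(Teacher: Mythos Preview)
Your proof is correct and follows essentially the same route as the paper: the easy inclusion comes from $1232$ containing $122$, and for the reverse you use $\asctops(x)=\nub(x)$ together with primitivity to see that $x_{k-1}>x_k$, so that $x_ix_jx_{k-1}x_k\simeq 1232$. You are simply more explicit than the paper in checking that $j<k-1$, which the paper leaves implicit.
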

\begin{proof}
Clearly, if~$x$ avoids~$122$ then it avoids~$1232$ too. The
inclusion $\Prim(122)\subseteq\Prim(1232)$ follows.
Conversely, if $x\in\Prim$ contains an occurrence $x_ix_jx_k$ of~$122$,
then $x_k\notin\nub(x)=\asctops(x)$ and $x_ix_jx_{k-1}x_k$ is
an occurrence of $1232$.
\end{proof}

\begin{proposition}
For $n\ge 1$,
$$
|\Modasc_n(1232)|=\sum_{k=1}^n\binom{n-1}{k-1}\sum_{j=1}^k(j-1)!S(k-j+1,j).
$$
Furthermore,
$$
\Modasc_{1232}(t)
=\sum_{i\ge 0}\frac{t^i(1-t)}{1-(i+1)t}.
$$
\end{proposition}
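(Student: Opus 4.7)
The plan is to combine Lemma~\ref{lemma_1232} with the general primitive-to-full machinery of Section~\ref{section_primitive}. Since $1232$ is primitive (no two adjacent entries are equal), Proposition~\ref{prop_prim_to_full} applies, giving
$$
|\Modasc_n(1232)|=\sum_{k=1}^n\binom{n-1}{k-1}|\Prim_k(1232)|.
$$
By Lemma~\ref{lemma_1232}, each $|\Prim_k(1232)|$ equals $|\Prim_k(122)|$, and the closed form $|\Prim_k(122)|=\sum_{j=1}^{k}(j-1)!\,S(k-j+1,j)$ was already derived in Section~\ref{section_122}. Substituting gives the first equation.

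For the generating function, I would use equation~\eqref{eq_prim_to_full}, which yields $\Modasc_{1232}(t)=\Prim_{1232}(t/(1-t))=\Prim_{122}(t/(1-t))$ by Lemma~\ref{lemma_1232}. Recall from Section~\ref{section_122} that $\Prim_{122}(s)=(1+s)F(s)$, where by Proposition~\ref{prop_ogf_122},
$$
F(s)=\sum_{i\ge 0}\frac{s^i}{(1-is)(1+s)^{i+1}}.
$$
Setting $s=t/(1-t)$ and using the identities $1+s=1/(1-t)$, $1-is=(1-(i+1)t)/(1-t)$, and $s^i=t^i/(1-t)^i$, each summand of $F(s)$ simplifies to $t^i(1-t)^2/(1-(i+1)t)$. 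Multiplying by the prefactor $1+s=1/(1-t)$ collapses one factor of $(1-t)$ and yields the stated formula
$$
\Modasc_{1232}(t)=\sum_{i\ge 0}\frac{t^i(1-t)}{1-(i+1)t}.
$$

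There is no real obstacle: Lemma~\ref{lemma_1232} does all the combinatorial work by reducing $1232$ to $122$ on primitive sequences, and the remainder is a direct application of Proposition~\ref{prop_prim_to_full} together with the substitution $s\mapsto t/(1-t)$. The only mildly delicate step is ensuring the algebra in the substitution is carried out correctly, but the identity $1-i\cdot\tfrac{t}{1-t}=\tfrac{1-(i+1)t}{1-t}$ makes the simplification essentially mechanical.
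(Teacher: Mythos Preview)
Your proof is correct and follows essentially the same approach as the paper: both invoke Proposition~\ref{prop_prim_to_full} together with Lemma~\ref{lemma_1232} and the formula for $|\Prim_k(122)|$ from Section~\ref{section_122} for the first statement, and both obtain the generating function by substituting $s=t/(1-t)$ into $\Prim_{122}(s)=(1+s)F(s)$ via Equation~\eqref{eq_prim_to_full}. The paper leaves the substitution as ``a little bit of additional work,'' whereas you carry out the algebra explicitly and correctly.
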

\begin{proof}
The first statement follows from Proposition~\ref{prop_prim_to_full},
Lemma~\ref{lemma_1232}, and the equality
$|\Prim_k(122)|=\sum_{j=1}^k(j-1)!S(k-j+1,j)$, proved in
Section~\ref{section_122}.
As observed below Proposition~\ref{prop_ogf_122}, two expressions for
the {\ogf} of $\Prim(122)$ can be obtained as $\Prim_{122}(t)=(1+t)F(t)$.
Since $\Prim_{122}(t)=\Prim_{1232}(t)$, the second statement follows---with
a little bit of additional work---by setting $y=1232$ in
Equation~\eqref{eq_prim_to_full}.
\end{proof}

A shift by one position of $\Modasc_{1232}(t)$ is recorded as
A047970~\cite{Sl}.

\subsection{Patterns $213$, $231$ and $321$}\label{section_213_231_321}

We solve the patterns~$y\in\{213,231,321\}$ with the
machinery of Theorem~\ref{thm_transport}.
First, we show that in each of these cases standardization maps
bijectively $\Prim(y)$ to $\Omega(y)$. Then, we count~$\Omega(y)$
and use Proposition~\ref{prop_prim_to_full} to recover the
full enumeration of~$\Modasc(y)$. Let us start with a simple lemma.

\begin{lemma}\label{lemma_213_231}
We have
$$
\Prim(212,213)=\Prim(213)
\quad\text{and}\quad
\Prim(221,231)=\Prim(231).
$$
\end{lemma}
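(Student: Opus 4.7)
The forward inclusions $\Prim(212,213)\subseteq\Prim(213)$ and $\Prim(221,231)\subseteq\Prim(231)$ are trivial, since each left-hand side is defined by avoiding a strictly larger set of patterns. For the reverse inclusions my plan is to reason by contrapositive on each side: show that a primitive modified ascent sequence containing $212$ must also contain $213$, and one containing $221$ must contain $231$.

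The main tool I would use is a simple ``backing-up'' observation for primitive sequences: if $x\in\Prim$ and $x_\ell$ takes a value that already appears at some earlier position, then $x_\ell\notin\nub(x)$; Proposition~\ref{prop_modasc_prop} then gives $x_\ell\notin\asctops(x)$, so $x_{\ell-1}\ge x_\ell$; and primitivity upgrades this to the strict inequality $x_{\ell-1}>x_\ell$.

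For $212\Rightarrow 213$, given an occurrence $x_ix_jx_k\simeq 212$ with $i<j<k$ and $x_i=x_k>x_j$, I would apply the observation to the repeated value $x_k$ to obtain $x_{k-1}>x_k$. The case $k-1=j$ is ruled out immediately, since it would force $x_{k-1}=x_j<x_k$; hence $i<j<k-1$, and at these three positions the values $x_j<x_i<x_{k-1}$ form an occurrence of $213$. The argument for $221\Rightarrow 231$ is entirely symmetric: from $x_ix_jx_k\simeq 221$ with $x_i=x_j>x_k$, the same principle applied to the repeated value $x_j$ gives $x_{j-1}>x_j$; the case $j-1=i$ is excluded as before; and at positions $i<j-1<k$ the values satisfy $x_{j-1}>x_i>x_k$, which is an occurrence of $231$.

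I do not foresee any real obstacle. The whole content of the lemma is the elementary observation that in a primitive modified ascent sequence a repeated value must be \emph{strictly} exceeded by its immediate predecessor; primitivity enters exactly once in each direction, precisely to rule out the degenerate flat-step case at this predecessor, and the only routine verification is that the replacement index $k-1$ (respectively $j-1$) lies strictly between the two outer positions so that we genuinely obtain three increasing indices.
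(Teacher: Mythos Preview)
Your proposal is correct and follows essentially the same approach as the paper's proof: in each case one uses that the repeated entry is not a leftmost copy, hence not an ascent top, and primitivity then forces a strict descent at its predecessor, producing the required $213$ (respectively $231$) at positions $i,j,k-1$ (respectively $i,j-1,k$). You spell out the edge cases $k-1=j$ and $j-1=i$ a bit more explicitly than the paper does, but the argument is otherwise identical.
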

\begin{proof}
Showing that $\Prim(213)$ is contained in $\Prim(212,213)$
is sufficient to prove the first equality. Let $x\in\Prim(213)$. 
For a contradiction, suppose that $x$ contains~$212$ and let
$x_ix_jx_k$ be an occurrence of~$212$ in $x$. Note that
$x_k\notin\nub(x)=\asctops(x)$. Since~$x$ is primitive, it must be
$x_{k-1}>x_k$. Hence $x_ix_jx_{k-1}$ is an occurrence of $213$,
which is impossible. The second equality can be proved similarly.
If $x_ix_jx_k\simeq 221$, then it must be $x_{j-1}>x_j$, and
$x_ix_{j-1}x_k\simeq 231$.
\end{proof}

To prove the next result, we combine the previous lemma
with the transport theorem. Recall from Theorem~\ref{thm_transport}
that standardization maps bijectively $\Prim[p]$ to $\Omega(p)$,
where $[p]=\{x\in\Cay:\std(x)=p\}$ and $\Omega=1\oplus\Sym(\omega)$.

\begin{corollary}\label{cor_213_231_321}
For $n\ge 1$, standardization maps bijectively:
\begin{align*}
\Prim_n(213)&\longrightarrow\Omega_n(213);\\
\Prim_n(231)&\longrightarrow\Omega_n(231);\\
\Prim_n(321)&\longrightarrow 1\oplus\Sym_{n-1}(321).
\end{align*}
\end{corollary}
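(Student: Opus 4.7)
The plan is to invoke Theorem~\ref{thm_transport}, which provides a size-preserving bijection $\std:\Prim[p]\to\Omega(p)$, where $[p]=\{x\in\Cay:\std(x)=p\}$ is the basis of Cayley patterns that standardize to $p$. For each $y\in\{213,231,321\}$ I would first identify $[y]$ explicitly by a direct case analysis of the length-three Cayley permutations and their standardizations. This calculation gives
\[
[213]=\{213,212\},\qquad [231]=\{231,221\},\qquad [321]=\{321\}.
\]
The last equality reflects the fact that the three entries of $321$ are strictly distinct, so no Cayley permutation with repeated values can standardize to it.

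For $y\in\{213,231\}$, Lemma~\ref{lemma_213_231} states $\Prim(212,213)=\Prim(213)$ and $\Prim(221,231)=\Prim(231)$, so $\Prim[y]=\Prim(y)$ in both cases. Combined with Theorem~\ref{thm_transport}, this yields the first two bijections in the corollary.

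For $y=321$, the singleton $[321]=\{321\}$ immediately gives $\Prim[321]=\Prim(321)$, so Theorem~\ref{thm_transport} provides a bijection $\std:\Prim_n(321)\to\Omega_n(321)$. It remains to identify $\Omega_n(321)$ with $1\oplus\Sym_{n-1}(321)$. By definition $\Omega=1\oplus\Sym(\omega)$, and since the bivincular pattern $\omega$ is a refinement of the classical pattern $321$, any permutation avoiding $321$ automatically avoids $\omega$; hence $\Sym(\omega)\cap\Sym(321)=\Sym(321)$. Because the leading $1$ of a permutation in $\Omega_n$ cannot appear in an occurrence of $321$, I conclude $\Omega_n(321)=1\oplus\Sym_{n-1}(321)$, as required.

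The one step requiring genuine verification is the case analysis producing $[213]$, $[231]$, $[321]$. It is elementary but essential: it is the only point in the argument where the definition of standardization is actually used. Once these bases are in hand, the rest of the proof is a mechanical combination of Theorem~\ref{thm_transport} and Lemma~\ref{lemma_213_231}.
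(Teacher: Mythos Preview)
Your proof is correct and follows essentially the same route as the paper: identify the bases $[213]=\{212,213\}$, $[231]=\{221,231\}$, $[321]=\{321\}$, invoke Lemma~\ref{lemma_213_231} and Theorem~\ref{thm_transport}, and for the last item use that $321$ is the classical pattern underlying~$\omega$. Your treatment of the $321$ case is slightly more explicit than the paper's, spelling out why $\Omega_n(321)=1\oplus\Sym_{n-1}(321)$, but the argument is the same.
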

\begin{proof}
Observe that $[213]=\{212,213\}$ and $[231]=\{221,231\}$.
By Lemma~\ref{lemma_213_231}, $\Prim(213)=\Prim[213]$
and $\Prim(231)=\Prim[231]$. The first two items follow
immediately by  Theorem~\ref{thm_transport}.
The last item follows as well since $[321]=\{321\}$ and $321$ is
the classical pattern underlying~$\omega$.
\end{proof}

Now, it is easy to prove that $\Modasc(321)$ is counted by the
binomial transform of the Catalan numbers, shifted by one position
(A007317 in the OEIS~\cite{Sl}).

\begin{proposition}
For $n\ge 1$, we have
$$
|\Modasc_n(321)|=\sum_{j=0}^{n-1}\binom{n-1}{j}c_{j},
$$
where $c_j=\frac{1}{j+1}\binom{2j}{j}$ is the $j$th Catalan number.
\end{proposition}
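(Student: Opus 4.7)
The plan is to chain together three ingredients: Corollary~\ref{cor_213_231_321}, the classical Catalan enumeration of $321$-avoiding permutations, and Proposition~\ref{prop_prim_to_full}. Since $321$ has all distinct entries, in particular no two consecutive equal entries, it is a primitive pattern, so Proposition~\ref{prop_prim_to_full} applies and reduces the problem to counting $\Prim_n(321)$.

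First I would use Corollary~\ref{cor_213_231_321}, which states that standardization restricts to a size-preserving bijection $\Prim_n(321)\longrightarrow 1\oplus\Sym_{n-1}(321)$. Prepending a fixed $1$ to an element of $\Sym_{n-1}(321)$ is trivially a bijection onto $1\oplus\Sym_{n-1}(321)$, so $|\Prim_n(321)|=|\Sym_{n-1}(321)|$. By the classical result of MacMahon and Knuth, permutations of length $n-1$ avoiding $321$ are counted by the Catalan number $c_{n-1}$, hence $|\Prim_n(321)|=c_{n-1}$.

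Plugging this into the formula of Proposition~\ref{prop_prim_to_full} and reindexing with $j=k-1$,
\[
|\Modasc_n(321)|=\sum_{k=1}^n\binom{n-1}{k-1}|\Prim_k(321)|
=\sum_{k=1}^n\binom{n-1}{k-1}c_{k-1}
=\sum_{j=0}^{n-1}\binom{n-1}{j}c_j,
\]
which is exactly the claimed identity. There is no real obstacle here: all of the content is packaged in the transport theorem (via Corollary~\ref{cor_213_231_321}) and in the binomial-transform lemma for primitive patterns, so the proof is a short assembly step confirming that the binomial transform of the Catalan numbers (sequence A007317) arises in this way.
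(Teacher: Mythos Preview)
Your proof is correct and follows essentially the same route as the paper: apply Corollary~\ref{cor_213_231_321} to get $|\Prim_k(321)|=|\Sym_{k-1}(321)|=c_{k-1}$, then feed this into Proposition~\ref{prop_prim_to_full} and reindex. The only cosmetic difference is that you explicitly note $321$ is primitive before invoking Proposition~\ref{prop_prim_to_full}, which the paper leaves implicit.
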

\begin{proof}
We have:
\begin{align*}
|\Modasc_n(321)|&=\sum_{k=1}^n\binom{n-1}{k-1}|\Prim_k(321)|
& \text{by Proposition~\ref{prop_prim_to_full}}\\
&=\sum_{k=1}^n\binom{n-1}{k-1}|1\oplus\Sym_{k-1}(321)|
& \text{by Corollary~\ref{cor_213_231_321}}\\
&=\sum_{k=1}^n\binom{n-1}{k-1}c_{k-1}
& \text{since $|\Sym_{k-1}(321)|=c_{k-1}$}\\
&=\sum_{j=0}^{n-1}\binom{n-1}{j}c_j.
\end{align*}
\end{proof}

\begin{remark*}
The set $\RGF(321)$ of restricted growth functions avoiding~$321$
is equinumerous~\cite{CCFS} with~$\Modasc(321)$. Note that $\RGF$
encodes set partitions in the same way as~$\Cay$ encodes ordered
set partitions (and $\Modasc\subseteq\Cay$). Is there any other
example of Wilf-equivalence between pattern-avoiding $\RGF$s
and modified ascent sequences?
\end{remark*}

Let us take care of the patterns $213$ and $231$ next.
For $n\ge 0$, denote by~$m_n$ the $n$th \emph{Motzkin number} (see
also A001006~\cite{Sl}).

\begin{proposition}\label{prop_motzkin}
For $y\in\{213,231\}$ and $n\ge 0$, we have
$$
|\Sym_n(\omega,y)|=m_n.
$$
\end{proposition}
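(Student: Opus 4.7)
The plan is to show that $a_n(y) := |\Sym_n(\omega, y)|$ satisfies the Motzkin recurrence $m_n = m_{n-1} + \sum_{j=0}^{n-2} m_j\, m_{n-2-j}$, with base cases $a_0 = a_1 = 1$, treating $y = 231$ and $y = 213$ by dual decompositions.

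For $y = 231$, I would decompose $p \in \Sym_n(\omega, 231)$ around the position $k$ of the maximum $n$. Classical $231$-avoidance forces $p = L\,n\,R$, where $L$ is a $231$-avoider on $\{1, \ldots, k-1\}$ and $R$ is a $231$-avoider on $\{k, \ldots, n-1\}$. If $k = n$, then $R$ is empty and $L$ can be any element of $\Sym_{n-1}(\omega, 231)$, contributing $a_{n-1}$. If $k < n$, the descent $p_k = n > p_{k+1}$ produces an occurrence of $\omega$ unless $p_{k+1} - 1$ does not appear at a position later than $k+1$. Since the values in $R$ are exactly $\{k, \ldots, n-1\}$ and $k-1$ lies in $L$ (hence earlier), this pins down $p_{k+1} = k = \min(R)$. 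Writing $R = k\,R'$, all remaining $\omega$-conditions restrict separately to $L$ and to $R'$ after standardization, yielding $a_{k-1}$ and $a_{n-1-k}$ choices respectively. Summing, $a_n = a_{n-1} + \sum_{k=1}^{n-1} a_{k-1}\, a_{n-1-k}$, which is exactly the Motzkin recurrence after reindexing $j = k-1$.

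For $y = 213$, I would run the dual argument around the position $k$ of $1$. Here $213$-avoidance gives $p = L\,1\,R$ with $L$ on $\{n-k+2, \ldots, n\}$ and $R$ on $\{2, \ldots, n-k+1\}$. The cross-boundary $\omega$-condition now constrains $L$ rather than $R$: any descent in $L$ whose bottom is $\min(L) = n-k+2$ has critical witness $\min(L) - 1 = \max(R)$, which necessarily appears later in $p$, producing $\omega$. Since $\min(L)$ is the minimum of $L$, the only way for it not to be a descent bottom is for it to occupy position $1$ of $L$, i.e.\ $p_1 = n-k+2$. Peeling off this forced first entry leaves $L$ minus its first letter and $R$ as independent $(\omega, 213)$-avoiders, and the same Motzkin recursion drops out.

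The main obstacle is the careful bookkeeping of $\omega$-occurrences that straddle the decomposition. For each descent $p_i > p_{i+1}$, one must locate the critical witness $p_{i+1} - 1$ and decide whether it lives in $L$, in $R$, or coincides with the distinguished entry ($n$ or $1$); the key claim is that after enforcing the single interface constraint identified above, every remaining potential $\omega$ is confined to $L$ or to $R$, so the internal avoidance on the two factors is necessary and sufficient. Once this case analysis is in place, induction with base cases $a_0 = a_1 = 1$ completes the proof.
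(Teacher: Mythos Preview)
Your proposal is correct and follows essentially the same approach as the paper. Both arguments decompose $p$ around the position of $1$ (for $y=213$) or of $n$ (for $y=231$), use classical avoidance to separate values into $L$ and $R$, and then observe that $\omega$-avoidance forces a single interface entry---$\min(L)$ leftmost in $L$ for $213$, $\min(R)$ leftmost in $R$ for $231$---after which the two blocks become independent; the paper packages this as the functional equation $M=1+tM+t^2M^2$, while you write the equivalent Motzkin recurrence.
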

\begin{proof}
Let $M(t)=\sum_{n\ge 0}|\Sym_n(\omega,y)|t^n$.
We show that $M=M(t)$ satisfies
\begin{equation}\label{eq_motzkin}
M=1+tM+t^2M^2,
\end{equation}
a combinatorial equation defining the Motzkin numbers.
Let us start from the pattern~$y=213$. Let $p\in\Sym(\omega,213)$.
If $p$ is not the empty permutation, then~$p$ decomposes as $p=L1R$,
where $L$ and $R$ are possibily empty. Since $p$ avoids $213$, we
have $L>R$, i.e. each entry in the prefix~$L$ is greater than each
entry in the suffix~$R$. Also, each of~$L$ and $R$ is (order
isomorphic to) a permutation avoiding $\omega$ and $213$.
Now, there are exactly two possibilites:
\begin{itemize}
\item $L=\emptyset$. Then $p=1R$, which gives the $tM$ term in
Equation~\eqref{eq_motzkin}.
\item $L\neq\emptyset$. In this case, the smallest entry of $L$ is
forced to be in the leftmost position of~$L$; indeed, let
$p_i=\min(L)$ and let $j$ be such that $p_j=p_i-1$. Note that
either $p_j=1$ or $p_j\in R$. In any case, it must be $j>i$. Thus,
if it were $i\ge 2$, then we would have an occurrence
$p_{i-1}p_ip_j$ of $\omega$ in $p$, which is impossible.
We have thus showed that the position of the smallest entry
of $L$ is forced. On the other hand, the remaining entries of $L$
(and $R$) are allowed to form any $(\omega,213)$-avoiding
permutation. This contributes with the $t^2M^2$ term in
Equation~\eqref{eq_motzkin}.
\end{itemize}
In the end,
$$
M\;=\;\underbrace{1}_{\text{empty}}\;+\;
\underbrace{t\cdot M}_{L=\emptyset}\;+\;
\underbrace{t^2\cdot M^2}_{L\neq\emptyset}.
$$
The equation for $y=231$ is obtained similarly.
Any $p\in\Sym(\omega,231)$ decomposes as $p=LnR$, with $L<R$,
and the smallest entry of $R$ is forced to be in the leftmost
position of $R$ by (the avoidance of) $\omega$.
\end{proof}

\begin{corollary}
Let $y\in\{213,231\}$. Then $|\Modasc_n(y)|$ is equal to the $n$th
Catalan number.
\end{corollary}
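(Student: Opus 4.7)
The plan is to chain together the three preceding results in the natural order and then invoke the classical binomial-transform identity between Motzkin and Catalan numbers.

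First I would rewrite $|\Prim_n(y)|$ using Corollary~\ref{cor_213_231_321}: for $y\in\{213,231\}$, standardization gives $|\Prim_n(y)|=|\Omega_n(y)|$. Since $\Omega=1\oplus\Sym(\omega)$, any $p\in\Omega_n(y)$ has the form $p=1\oplus q$ with $q$ a permutation of length $n-1$ avoiding $\omega$. I would verify the short fact that the leading $1$ cannot participate in an occurrence of $y$: in the pattern $213$ the smallest value must sit in the middle, and in $231$ it must sit on the right, whereas the $1$ in $1\oplus q$ is on the left (and is the global minimum). Hence $1\oplus q$ avoids $y$ iff $q$ does, so $|\Omega_n(y)|=|\Sym_{n-1}(\omega,y)|=m_{n-1}$ by Proposition~\ref{prop_motzkin}.

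Plugging this into Proposition~\ref{prop_prim_to_full} yields
\begin{equation*}
|\Modasc_n(y)|=\sum_{k=1}^{n}\binom{n-1}{k-1}m_{k-1}=\sum_{j=0}^{n-1}\binom{n-1}{j}m_j.
\end{equation*}
It remains to identify the right-hand side with $c_n$. The cleanest route, given Equation~\eqref{eq_prim_to_full}, is the generating-function argument: from $\Prim_y(t)=1+t\,M(t)$ (where $M(t)=\sum_{n\ge 0}m_n t^n$ satisfies $M=1+tM+t^2M^2$) we get
\begin{equation*}
\Modasc_y(t)=\Prim_y\!\left(\frac{t}{1-t}\right)=1+\frac{t}{1-t}\,M\!\left(\frac{t}{1-t}\right),
\end{equation*}
and a direct substitution into the Motzkin functional equation shows that $C(t):=\Modasc_y(t)$ satisfies $C=1+tC^2$, the defining equation of the Catalan generating function. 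Hence $|\Modasc_n(y)|=c_n$.

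The only real work is the generating-function check (or, equivalently, the combinatorial identity $c_n=\sum_j\binom{n-1}{j}m_j$), but this is a textbook fact about the binomial transform of the Motzkin numbers and should cause no trouble. The genuine content of the proposition has already been done upstream: the transport through standardization in Corollary~\ref{cor_213_231_321} and the Motzkin enumeration of $\Sym(\omega,y)$ in Proposition~\ref{prop_motzkin}. The main thing to be careful about is simply the bookkeeping of the index shift between $\Prim_n$ (length $n$, with its forced leading $1$) and $\Sym_{n-1}(\omega,y)$ (the part of $\Omega_n$ after that $1$).
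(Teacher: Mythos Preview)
Your proof is correct and follows essentially the same chain of implications as the paper: Proposition~\ref{prop_prim_to_full}, then Corollary~\ref{cor_213_231_321}, then $\Omega_k(y)=1\oplus\Sym_{k-1}(\omega,y)$, then Proposition~\ref{prop_motzkin}, and finally the binomial identity between Motzkin and Catalan numbers. The only differences are cosmetic: you spell out why the leading~$1$ cannot take part in an occurrence of~$213$ or~$231$ (the paper leaves this implicit), and you verify the Motzkin--Catalan identity via the substitution in Equation~\eqref{eq_prim_to_full} rather than citing Donaghey.
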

\begin{proof}
The case $n=0$ is trivial. For $n\ge 1$,
\begin{align*}
|\Modasc_n(y)|&=\sum_{k=1}^n\binom{n-1}{k-1}|\Prim_k(y)|
& \text{by Proposition~\ref{prop_prim_to_full}}\\
&=\sum_{k=1}^n\binom{n-1}{k-1}|\Omega_k(y)|
& \text{by Corollary~\ref{cor_213_231_321}}\\
&=\sum_{k=1}^n\binom{n-1}{k-1}|1\oplus\Sym_{k-1}(\omega,y)|\\
&=\sum_{k=1}^n\binom{n-1}{k-1}m_{k-1}
& \text{by Proposition~\ref{prop_motzkin}}\\
&=c_n,
\end{align*}
where the last one is a well known equality relating the Motzkin
and the Catalan numbers (see Donaghey~\cite[Eq.~(2)]{Do}).
\end{proof}

\subsection{Patterns~$123$ and $1234$}\label{section_123}

The enumeration of modified ascent sequences avoiding~$y\in\{123,1234\}$
can be obtained as a consequence of the transport of patterns
developed by Claesson and the current author~\cite{CC}. Indeed, for
every $k\ge 1$, the Burge transpose maps bijectively
$\Modasc_n(12\cdots k)$ to $\F_n(12\cdots k)$, where $\F(12\cdots k)$
is the set of Fishburn permutations~\cite{BMCDK} avoiding $12\cdots k$;
further, Gil and Weiner~\cite{GW} proved that
$$
|\F_n(123)|=2^{n-1}
\quad\text{and}\quad
|\F_n(1234)|=c_n.
$$
An alternative and arguably more direct approach for $y=123$
consists in counting~$\Prim(123)$ and using our favourite
Proposition~\ref{prop_prim_to_full}. Indeed, we have
$$
\Prim(123)=\lbrace\epsilon,1,12,121,1312,13121,141312,1413121,\dots\rbrace,
$$
where $\epsilon$ denotes the empty sequence. In other words,
there is only one primitive, $123$-avoiding modified
ascent sequence of length~$n$; namely, the sequence

\begin{center}
\begin{tabular}{ll}
$1k1(k-1)1(k-2)1\cdots 121$ & \text{if $n=2k-1$ is odd};\\
$1k1(k-1)1(k-2)1\cdots 12$  & \text{if $n=2(k-1)$ is even}.
\end{tabular}
\end{center}

Finally,
\begin{align*}
|\Modasc_n(123)|=\sum_{k=1}^n\binom{n-1}{k-1}|\Prim_k(123)|
=\sum_{k=1}^n\binom{n-1}{k-1} = 2^{n-1}.
\end{align*}

Note also that (again by Proposition~\ref{prop_prim_to_full})
primitive modified ascent sequences avoiding $1234$ are enumerated
by the Motzkin numbers

\subsection{Pattern $132$}\label{section_132}

We prove that $132$-avoiding modified ascent sequences are
counted by the odd Fibonacci numbers (A001519~\cite{Sl}).
As usual, let us count the primitive sequences first.

Let $n\ge 1$ and let $x\in\Prim_n(132)$. Recall from
Proposition~\ref{prop_modasc_prop} that all the copies of $\max(x)$
are in consecutive positions. Since $x$ is primitive, it contains
only one copy of its maximum value. Let $m\in[n]$ denote the index
of the only entry $x_m=\max(x)$. We show that either $m=n-1$
or $m=n$. There is nothing to prove if $n\le 3$. Otherwise, let $n>3$. For a contradiction, assume $m\le n-2$. Since $x$ is primitive,
at least one of the last two entries, say $x_i$, $i\in\{n-1,n\}$,
is not equal to~$1$; hence~$x_1x_mx_i$ is an occurrence of $132$,
which is impossible. Consequently, any $x\in\Prim(132)$ falls in
exactly one of the following two cases:
\begin{itemize}
\item $m=n$. In this case, $x_1\cdots x_{n-1}\in\Prim_{n-1}(123)$
and $x_n=\max(x_1\cdots x_{n-1})+1$.
\item $m=n-1$. In this case, it must be $x_n=1$, or else we would
have~$x_1x_mx_n\Prim_{122}\simeq 132$. Specifically, we have
$x_1\cdots x_{n-2}\in\Prim_{n-2}(123)$,
$x_{n-1}=\max(x_1\cdots x_{n-2})+1$, and $x_n=1$.
\end{itemize}
Conversely, it is easy to see that inserting a suffix $m$ or $m1$
to any $y\in\Prim(132)$, where $m=\max(y)+1$, yields a primitive,
$132$-avoiding modified ascent sequence. Therefore,
$$
|\Prim_n(132)|=\underbrace{|\Prim_{n-1}(132)|}_{m=n}
+\underbrace{|\Prim_{n-2}(132)|}_{m=n-1}.
$$
Since $|\Prim_0(132)|=|\Prim_1(132)|=1$, it follows that
$|\Prim_n(132)|$ is equal to the $n$th Fibonacci number~$f_n$.
In the end, a well known formula for the odd-indexed Fibonacci numbers
gives
\begin{align*}
|\Modasc_n(132)|&=\sum_{k=1}^n\binom{n-1}{k-1}|\Prim_k(132)|\\
&=\sum_{k=1}^n\binom{n-1}{k-1}f_k = f_{2n-1}.
\end{align*}

\subsection{Pattern~$312$}\label{section_312}

In this section, we give a bijection between $\Prim_{n+1}(312)$
and the set of Dyck paths of semilength~$n$ that avoid the
(consecutive) subpath~$\dudu$. Sapounakis {\etal}~\cite{STT}
proved that an {\ogf} for these paths is
\begin{equation}\label{eq_dudu}
D(t)=\frac{1+t-t^2-\sqrt{t^4-2t^3-5t^2-2t+1}}{2t}.
\end{equation}
To do so, they found two equations relating them with Dyck paths
that start with a low peak~$\U\D$. Using Lagrange's inversion formula,
they also computed the number of $\dudu$-avoiding Dyck paths of
semilength~$n$ (see also A102407)
$$
d_n=\sum_{j=0}^{\lfloor\frac{n}{2}\rfloor}
\frac{1}{n-j}\binom{n-j}{j}
\sum_{i=0}^{n-2j}\binom{n-2j}{i}\binom{j+i}{n-2j-i+1},
$$
where $n\ge 1$. Letting $d_0=1$ and applying
Proposition~\ref{prop_prim_to_full}, we obtain
\begin{align*}
|\Modasc_n(312)|&=\sum_{k=1}^n\binom{n-1}{k-1}|\Prim_k(312)|\\
&=\sum_{k=1}^n\binom{n-1}{k-1}d_{k-1},
\end{align*}
which gives the sequence
$$
\left(|\Modasc_n(312)|\right)_{n\ge 0}=
1, 1, 2, 5, 14, 43, 142, 495, 1796, 6715, 25692,\dots.
$$
At present, these numbers do not appear in the OEIS~\cite{Sl}.
Combining Equation~\eqref{eq_prim_to_full} with Equation~\eqref{eq_dudu},
we obtain an {\ogf} for $\Modasc_{312}$:
$$
\Modasc_{312}(t)=
\frac{1}{2}\left(3+\frac{t}{1-t}-\frac{t^2}{(1-t)^2}-
\sqrt{\frac{1-6t+7t^2-2t^3+t^4}{(1-t)^4}}\right).
$$
A more direct method to determine $D(t)$ is illustrated below.
The main advantage of our construction is that it relies on a
combinatorial decomposition of $\dudu$-avoiding Dyck paths which we
can replicate on $\Prim(312)$ to define a bijection between these
two structures.
Any nonempty $\dudu$-avoiding Dyck path~$P$ that hits
the $x$-axis $k$ times, $k\ge 1$, decomposes as
\begin{equation}\label{eq_decomp_dudu}
P=\U Q_1\D\;\U Q_2^+\D\;\cdots\;\U Q_{k-1}^+\D\;\U Q_k\D,
\end{equation}
where each factor $Q_i$ is a $\dudu$-avoiding Dyck path; further,
all the factors except for~$Q_1$ and $Q_k$ must be nonempty, as
denoted by the superscript~``$+$''.
Hence $D=D(t)$ satisfies the combinatorial equation
\begin{align*}
D\;&=\;\overbrace{1}^{\text{empty path}}\;+\;
\overbrace{tD}^{k=1}\;+\;
\overbrace{t^2D^2\sum_{k\ge 0}\left[t(D-1)\right]^k}^{k\ge 2}\\
&=\; 1+tD+\frac{t^2D^2}{1-t(D-1)},
\end{align*}
whose solution is given by the {\ogf} of Equation~\ref{eq_dudu}.

To obtain an analogous decomposition on $\Prim(312)$, we shall
decompose primitive, $312$-avoiding modified ascent sequences by
highlighting their copies of~$1$---something we have already done for
the pattern~$122$ in Section~\ref{section_122}. First, we collect
some geometric properties of $\Prim(312)$ in the next proposition.

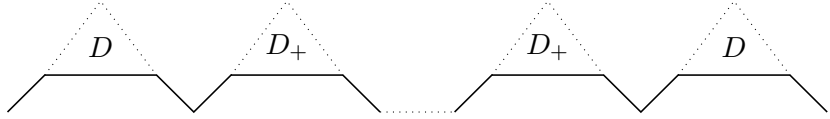
\begin{figure}
\centering
\begin{tikzpicture}[scale=0.49, baseline=20pt]
	\draw[semithick](0,0)--(1,1)--(4,1)--(5,0)--(6,1)--(9,1)--(10,0);
	\draw[semithick](12,0)--(13,1)--(16,1)--(17,0)--(18,1)--(21,1)--(22,0);
	\draw[dotted] (1,1)--(2.5,3)--(4,1);
	\draw[dotted] (6,1)--(7.5,3)--(9,1);
	\draw[dotted] (13,1)--(14.5,3)--(16,1);
	\draw[dotted] (18,1)--(19.5,3)--(21,1);
	\draw[dotted] (10,0)--(12,0);
	\node at (2.5,1.75){$D$};
	\node at (7.5,1.75){$D_+$};
	\node at (14.5,1.75){$D_+$};
	\node at (19.5,1.75){$D$};
\end{tikzpicture}
\caption{Decomposition of a $\dudu$-avoiding Dyck path that hits
the $x$-axis at least twice. Here, $D$ denotes a generic
$\dudu$-avoiding path, while $D_+$ denotes a nonempty one.}\label{figure_dudu}
\end{figure}

\begin{proposition}\label{prop_312}
Let $x\in\Prim_n(312)$, with~$n\ge 1$. Write
$$
x=1B_1\;1B_2\;\cdots\;1B_{k-1}\;1B_k,
$$
where $k\ge 1$ is the number of copies of~$1$ contained in~$x$.
For $i\in[k]$, let $m_i=\max(B_i)$ and denote by $\ell_i$ the
leftmost entry in $B_i$. Then, for each $i\le k-1$:
\begin{enumerate}
\item $B_i\neq\emptyset$.
\item $B_{i+1}\ge m_i$; that is, $a\ge m_i$ for each $a\in B_{i+1}$.
\item $\ell_{i+1}=1+m_i$.
\item Let $\bar{B}_1$ be obtained by subtracting $1$ to each
entry of $B_1$. Then $\bar{B}_1\in\Prim(312)$.
\item Let $\tilde{B}_i$ be obtained by subtracting $m_{i-1}-1$ to each
entry of $B_i$, for $i=2,\dots,k$. Then $1\tilde{B}_i\in\Prim(312)$.
\end{enumerate}
\end{proposition}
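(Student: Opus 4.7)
The plan is to verify the five items in sequence. Items~1--3 are short structural observations about the factorization $x=1B_1\cdots 1B_k$, while items~4 and~5 each require checking four properties for the shifted (and possibly prepended) block: being a Cayley permutation, being primitive, satisfying $\asctops=\nub$, and avoiding~$312$.

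First I would dispose of items~1--3. For item~1: if $B_i=\emptyset$ for some $i\le k-1$ the two surrounding separators would be adjacent in~$x$, forming a flat step and contradicting $x\in\Prim$. For item~2: the blocks contain no~$1$, so every entry of a block is $\ge 2$; if some $a\in B_{i+1}$ were less than~$m_i$, then $a\ge 2$ and the triple consisting of $m_i\in B_i$, the separating~$1$ preceding $B_{i+1}$, and $a\in B_{i+1}$ would be an occurrence of~$312$ in~$x$. For item~3: the entry $\ell_{i+1}$ is preceded by a~$1$ and is strictly larger, so it is an ascent top and, by Proposition~\ref{prop_modasc_prop}, a leftmost copy of a new value; applying item~2 iteratively gives $m_1\le\cdots\le m_i$, so the prefix of~$x$ just before~$\ell_{i+1}$ has maximum~$m_i$, and the recursive description of~$\Modasc$ from Section~\ref{section_modasc} forces $\ell_{i+1}\le m_i+1$. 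Combined with $\ell_{i+1}>m_i$, this yields $\ell_{i+1}=m_i+1$.

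For items~4 and~5 I would treat the blocks uniformly, adopting the convention $m_0=1$ for item~4. A short induction inside $B_i$, using the same mechanism as in item~3, shows that the ascent tops of~$B_i$ are the consecutive new values $m_{i-1}+1,\dots,m_i$, while the non-ascent tops are descents---hence non-leftmost repeats---whose values lie in $\{m_{i-1},\dots,m_i-1\}$ by item~2. The multiset of values of~$B_i$ therefore covers $\{m_{i-1}+1,\dots,m_i\}$ and possibly also contains~$m_{i-1}$. Subtracting $m_{i-1}-1$ from $B_i$ (and, in item~5, prepending a~$1$) then produces a word that hits every integer from~$1$ to its maximum, so it is a Cayley permutation. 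Primitivity is immediate, since the only new adjacency is between the prepended~$1$ and $\ell_i-(m_{i-1}-1)=2$. The equality $\asctops=\nub$ transfers block by block, because the ascent-top and leftmost-copy structure inside~$B_i$ is preserved by the affine shift. Finally, $312$-avoidance is inherited from~$x\in\Prim(312)$: the prepended~$1$ in item~5 is the leftmost and smallest letter of~$1\tilde{B}_i$, so it cannot participate in any new occurrence of~$312$.

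The subtlest step is item~5 when $m_{i-1}$ genuinely reappears inside~$B_i$, so that~$1\tilde{B}_i$ acquires a second letter~$1$. One must check both that this interior~$1$ is neither an ascent top nor a leftmost copy of~$1\tilde{B}_i$---matching the fact that the corresponding~$m_{i-1}$ in~$B_i$ was neither an ascent top nor a leftmost copy of~$x$---and that coupling it with the prepended~$1$ cannot create a fresh occurrence of~$312$ straddling position~$1$. Once this case analysis is laid out, everything else reduces to an unwrapping of Proposition~\ref{prop_modasc_prop}.
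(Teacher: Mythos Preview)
Your argument for item~3 has a genuine gap. You claim that ``the recursive description of~$\Modasc$ from Section~\ref{section_modasc} forces $\ell_{i+1}\le m_i+1$'', but the recursive description does not yield such a bound in the final sequence. When a new ascent top is appended, its value is bounded by one plus the maximum of the \emph{unmodified} prefix; subsequent insertions may then increment that ascent top without incrementing the maximum of its own prefix by the same amount, so the inequality $x_j\le 1+\max(x_1\cdots x_{j-1})$ need not survive. For instance, $x=1312\in\Modasc$ has $x_2=3>1+\max(x_1)=2$. Of course $1312$ contains~$312$, and that is exactly the point: the upper bound you want is a consequence of $312$-avoidance, not of membership in~$\Modasc$ alone. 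The paper obtains it by observing that in a $312$-avoiding modified ascent sequence the ascent tops must appear in increasing order (if $x_{j_1}>x_{j_2}$ were ascent tops with $j_1<j_2$, then $x_{j_1}x_{j_2-1}x_{j_2}\simeq 312$); since by Proposition~\ref{prop_modasc_prop} the ascent tops take each value in $\{1,\dots,\max(x)\}$ exactly once, they are literally $1,2,\dots,\max(x)$ from left to right, and $m_i$, $\ell_{i+1}$ are consecutive among them. Your ``short induction inside~$B_i$'' for items~4 and~5 rests on this same mechanism, so it inherits the gap.

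A smaller issue: the convention $m_0=1$ does not actually unify items~4 and~5. With $m_0=1$ you would subtract $m_0-1=0$ from~$B_1$ and obtain~$B_1$ itself, whose values lie in $\{2,\dots,m_1\}$ and which is therefore not a Cayley permutation. Item~4 genuinely requires subtracting~$1$ (and no prepended~$1$); it is cleaner to treat it separately, as the paper does.
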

\begin{proof}
\begin{enumerate}
\item This claim follows immediately since we are assuming~$x$
to be primitive.
\item An entry $a\in B_{i+1}$, $a<m_i$, would realize an occurrence
$m_i1a$ of~$312$, which is impossible.
\item In a $312$-avoiding modified ascent sequence, all the ascent
tops must be in (strictly) increasing order from left to right.
Indeed, if $x_{j_1}>x_{j_2}$ were ascent tops with $j_1<j_2$, then
$x_{j_1}x_{j_2-1}x_{j_2}$ would be an occurrence of $312$. Now, 
recall that the set $\asctops(x)=\nub(x)$ contains exactly one
copy of each integer from $1$ to $\max(x)$. Further, $m_i$ is the
rightmost (and thus largest) ascent top in $B_i$, while~$\ell_{i+1}$ is
the leftmost (and thus smallest) ascent top in $B_{i+1}$.
The desired claim follows immediately.
\item All the values between~$2$ and $\max(B_1)$ appear in~$B_1$ due
to what proved in Item~3. Note also that the leftmost entry
of~$\bar{B}_1$ is equal to~$x_2-1=2-1=1$. Thus $\bar{B}_1$ is a
Cayley permutation on~$[\max(B_1)-1]$ that starts with~$1$.
Since $\nub(\bar{B}_1)=\asctops(\bar{B}_1)$ and
$\bar{B}_1$ avoids~$312$, the word $\bar{B}_1$ is a primitive,
$312$-avoiding modified ascent sequence.
\item The proof of this item is analogous to the previous one.
The only difference is that, the correct quantity to subtract in order
to rescale the entries of $B_i$ properly is $m_{i-1}-1=\ell_{i}-2$.
Indeed, let $a\in B_i$. Then $a\ge m_{i-1}$ due to Item~2, and
$$
a-(m_{i-1}-1)\ge m_{i-1}-m_{i-1}+1=1.
$$
Similarly, $\ell_i=m_{i-1}+1$ due to Item~3, and
$$
\ell_i-(m_{i-1}-1)=m_{i-1}+1-m_{i-1}+1=2.
$$
As a result, all the values between~$1$ and~$m_i-m_{i-1}+1$ appear
in~$1\tilde{B}_i$; that is, the word~$1\tilde{B}_i$ is a Cayley
permutation on~$[m_i-m_{i-1}+1]$. More specifically, in analogy with
what observed for $B_1$, it is a primitive, $312$-avoiding modified
ascent sequence.
\end{enumerate}
\end{proof}

Keeping the same notations of Proposition~\ref{prop_312}, every
nonempty $x\in\Prim(312)$ that contains~$k\ge 1$ copies of~$1$
decomposes as
$$
x=1B_1\;1B_2\;\cdots\;1B_{k-1}\;1B_k,
$$
where $B_i$ is nonempty for $i\le k-1$, the leftmost
block~$B_1$ sastisfies $\bar{B}_1\in\Prim(312)$, and
$1\tilde{B}_i\in\Prim(312)$ for each $i\ge 2$.
As an example, let $x=123432561761897$. Note that $x\in\Prim(312)$.
Then~$x$ decomposes as
$$
x\;=\;
1\underbrace{2343256}_{B_1}\;
1\underbrace{76}_{B_2}\;
1\underbrace{897}_{B_3},
$$
where
\begin{center}
\begin{tabular}{l}
$\;\;\bar{B}_1=1232145$,\\
$1\tilde{B}_2=121$,\\
$1\tilde{B}_3=1231$.
\end{tabular}
\end{center}
On the other hand, given any such sequence
$\bar{B}_1,1\tilde{B}_2,\dots,1\tilde{B}_k$ of primitive,
$312$-avoiding modified ascent sequences, one uniquely
reconstruct~$x=1B_1\;1B_2\;\cdots\;1B_k$ by suitably rescaling
the entries of the blocks~$\bar{B}_1,\tilde{B}_2,\dots,\tilde{B}_k$
as in the last two items of Proposition~\ref{prop_312}; that is,
by adding~$1$ to each entry of $\bar{B}_1$, and $m_{i-1}-1$
to each entry of $\tilde{B}_i$, where $m_{i-1}$ is the maximum
of $B_{i-1}$ and $i\ge 2$.

We now have all the ingredients to define a bijection between
$\Prim_{n+1}(312)$ and the set of $\dudu$-avoiding Dyck paths
of semilength~$n$. Given $x\in\Prim(312)$, we define the path
$\phi(x)=P$ recursively by letting $\phi(\emptyset)=\phi(1)=\emptyset$
and, if $x=1B_11B_2\cdots1B_{k-1}1B_k$ has length two
or more,
$$
\begin{array}{cccccc}
\phi(1B_1 &1B_2 &\dots & 1B_{k-1} & 1B_k)\\
=\U\phi(\bar{B}_1)\D
&\U\phi(1\tilde{B}_2)\D&\dots
&\U\phi(1\tilde{B}_{k-1})\D
&\U\phi(1\tilde{B}_k)\D.
\end{array}
$$
For instance, we have
\begin{align*}
&\phi(12)=\U\phi(\bar{2})\D=\U\phi(1)\D =\U\D;\\
&\phi(121)=\U\phi(\bar{2})\D\U\phi(\emptyset)\D=\U\D\U\D;\\
&\phi(123)=\U\phi(\bar{23})\D=\U\phi(12)\D=\U\U\D\D.
\end{align*}
The Dyck path (of semilength~$14$) associated with the
sequence~$x=123432561761897$ (of semilength~$15$) of the
previous example is depicted in Figure~\ref{figure_dudupath}.
The leftmost factor of the path is obtained from $1B_1=12343256$,
recursively, as
\begin{align*}
\U\phi(\bar{B}_1)\D &= \U\left(\phi(1232145)\right)\D\\
&= \U\left(
\left(\U\phi(121)\D\right)
\left(\U\phi(123)\D\right)
\right)\D\\
&= \U\left(
\left(\U\U\D\U\D\D\right)
\left(\U\U\U\D\D\D\right)
\right)\D\\
&=\U^3\D\U\D^2\U^3\D^4.
\end{align*}
By Proposition~\ref{prop_312}, the path~$P$ satisfies the
decomposition determined by Equation~\ref{eq_decomp_dudu}.
In particular, for $2\le i<k$ the path~$\phi(1\tilde{B}_i)$ is
not empty since~$B_i$ is nonempty. Hence~$P$ is a $\dudu$-avoiding
Dyck path. Note also that the semilength of~$P=\phi(x)$ is equal to
one less than the length of~$x$; the shift in length is a result
of having mapped the leftmost
block~$1B_1$ to $\U\phi(\bar{B}_1)\D$; on the other hand, the
semilength of every other factor $\U\phi(1\tilde{B}_i)\D$ matches
the length of the corresponding block~$1B_i$ of~$x$.
Due to the above discussion, the map $\phi$ defined this way
is a bijection between $\Prim_{n+1}(312)$ and the set of
$\dudu$-avoiding Dyck paths of semilength~$n$.

\begin{figure}
\centering
\begin{tikzpicture}[scale=0.49, baseline=20pt]
	\draw[semithick] (0,0)--(3,3)--(4,2)--(5,3)--(7,1)--(10,4)--(14,0)
	--(16,2)--(17,1)--(18,2)--(20,0)--(23,3)--(25,1)--(26,2)--(28,0);
	\draw[semithick] (0,0)--(28,0);
	\draw[dotted] (1,1)--(13,1);
	\draw[dotted] (15,1)--(19,1);
	\draw[dotted] (21,1)--(27,1);
	\node[] at (7,-0.5){$12343256$};
	\node[] at (17,-0.5){$176$};
	\node[] at (24,-0.5){$1897$};
	\filldraw (0,0) circle (3pt);
	\filldraw (1,1) circle (3pt);
	\filldraw (2,2) circle (3pt);
	\filldraw (3,3) circle (3pt);
	\filldraw (4,2) circle (3pt);
	\filldraw (5,3) circle (3pt);
	\filldraw (6,2) circle (3pt);
	\filldraw (7,1) circle (3pt);
	\filldraw (8,2) circle (3pt);
	\filldraw (9,3) circle (3pt);
	\filldraw (10,4) circle (3pt);
	\filldraw (11,3) circle (3pt);
	\filldraw (12,2) circle (3pt);
	\filldraw (13,1) circle (3pt);
	\filldraw (14,0) circle (3pt);
	\filldraw (15,1) circle (3pt);
	\filldraw (16,2) circle (3pt);
	\filldraw (17,1) circle (3pt);
	\filldraw (18,2) circle (3pt);
	\filldraw (19,1) circle (3pt);
	\filldraw (20,0) circle (3pt);
	\filldraw (21,1) circle (3pt);
	\filldraw (22,2) circle (3pt);
	\filldraw (23,3) circle (3pt);
	\filldraw (24,2) circle (3pt);
	\filldraw (25,1) circle (3pt);
	\filldraw (26,2) circle (3pt);
	\filldraw (27,1) circle (3pt);
	\filldraw (28,0) circle (3pt);
\end{tikzpicture}
\caption{Dyck path associated with the sequence $x=123432561761897$.
}\label{figure_dudupath}
\end{figure}
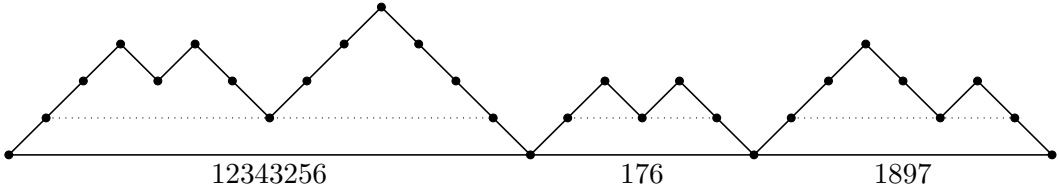

\begin{remark*}
We end this section with a numerical remark. Bao {\etal}~\cite{BCCGZ}
have recently showed a bijection between $\dudu$-avoiding Dyck paths
and the set of permutations that are sorted by the $(132,321)$-machine.
They also characterized these permutations as
$$
\mathrm{Sort}(132,321)=
\Sym\left(132,\;
\begin{tikzpicture}[scale=0.4, baseline=20.5pt]
\fill[NE-lines] (2,0) rectangle (3,4);
\fill[NE-lines] (0,2) rectangle (4,3);
\draw [semithick] (0.001,0.001) grid (3.999,3.999);
\filldraw (1,1) circle (6pt);
\filldraw (2,3) circle (6pt);
\filldraw (3,2) circle (6pt);
\end{tikzpicture}
\right).
$$
Using the BiSC algorithm~\cite{Per} and Theorem~\ref{thm_transport},
we can conjecture that
$$
\std\left(\Prim(312)\right)=
\Sym\left(2413,\;
\begin{tikzpicture}[scale=0.4, baseline=20.5pt]
\fill[NE-lines] (1,0) rectangle (2,4);
\fill[NE-lines] (0,1) rectangle (4,2);
\draw [semithick] (0.001,0.001) grid (3.999,3.999);
\filldraw (1,3) circle (6pt);
\filldraw (2,2) circle (6pt);
\filldraw (3,1) circle (6pt);
\end{tikzpicture},\;
\begin{tikzpicture}[scale=0.4, baseline=20.5pt]
\fill[NE-lines] (2,3) rectangle (3,4);
\draw [semithick] (0.001,0.001) grid (3.999,3.999);
\filldraw (1,3) circle (6pt);
\filldraw (2,1) circle (6pt);
\filldraw (3,2) circle (6pt);
\end{tikzpicture}
\right).
$$
Can the Wilf-equivalence between $\mathrm{Sort}_n(132,321)$ and
$\std\left(\Prim_{n+1}(312)\right)$ be explained more transparently?
\end{remark*}

\subsection{Patterns~$1213$ and $1312$}\label{section_1213_1312}

Modified ascent sequences are subject to the geometric constraints
established by Proposition~\ref{prop_modasc_prop}. This explains the
presence of patterns $x,y\in\Cay$, $x\neq y$, equivalent in the sense
that $\Modasc(x)=\Modasc(y)$. For instance, we~\cite{Ce2} have proved
that
$$
\Modasc(212)=\Modasc(1212)=\Modasc(2132)=\Modasc(12132).
$$
The following result has the same flavor.

\begin{proposition}
We have
$$
\Modasc(213)=\Modasc(1213)
\quad\text{and}\quad
\Modasc(312)=\Modasc(1312).
$$
\end{proposition}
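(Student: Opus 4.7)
The plan is to prove both equalities by the same template: the short-pattern-avoiders are trivially contained in the longer-pattern-avoiders (any occurrence of $1213$ carries an occurrence of $213$ in its last three entries, and any occurrence of $1312$ carries an occurrence of $312$ in its last three entries). For the reverse inclusions, I would show that inside a modified ascent sequence every occurrence of the short pattern can be ``extended to the left'' using the characterization $\nub(x)=\asctops(x)$ from Proposition~\ref{prop_modasc_prop}.

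For $\Modasc(1213)\subseteq\Modasc(213)$: assume $x\in\Modasc$ contains $213$, and pick an occurrence $x_ix_jx_k$ (so $x_j<x_i<x_k$) that \emph{minimizes} the middle value~$x_j$ over all $213$-occurrences of~$x$. Let $j'$ be the position of the leftmost copy of the value $x_j$ in $x$, so $j'\le j$. If $j'<i$, then $x_{j'}x_ix_jx_k$ is an occurrence of $1213$, as required. Otherwise $i\le j'\le j$; the case $j'=i$ is ruled out since $x_{j'}=x_j\ne x_i$, so $i<j'\le j$. Because $x_{j'}$ is a leftmost copy, we have $x_{j'}\in\nub(x)=\asctops(x)$, hence $x_{j'-1}<x_{j'}=x_j$, and in particular $j'-1\ge i$. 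The case $j'-1=i$ yields $x_i<x_j$, contradicting $x_j<x_i$; so $i<j'-1<k$ and $x_{j'-1}<x_j<x_i<x_k$, meaning $x_ix_{j'-1}x_k$ is another occurrence of~$213$ with strictly smaller middle value $x_{j'-1}<x_j$, contradicting the minimality of $x_j$. Therefore $j'<i$ and $x$ contains $1213$.

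For $\Modasc(1312)\subseteq\Modasc(312)$: the argument is completely symmetric. Assume $x\in\Modasc$ contains $312$ and pick an occurrence $x_ax_bx_c$ (with $x_b<x_c<x_a$) that minimizes the ``$1$'' value $x_b$. Let $b'$ be the leftmost copy of the value $x_b$. If $b'<a$, then $x_{b'}x_ax_bx_c$ has values $x_{b'}=x_b<x_c<x_a$ with $x_{b'}=x_b$ a repeated small value at positions~$1$ and $3$ of the pattern, and is therefore an occurrence of~$1312$. Otherwise one argues as above: $b'=a$ is impossible, and for $a<b'\le b$ the ascent-top property forces $x_{b'-1}<x_{b'}=x_b$ with $b'-1\ge a$, whence either a direct contradiction to $x_b<x_a$ or a new $312$-occurrence $x_ax_{b'-1}x_c$ with strictly smaller ``$1$'' value, violating minimality.

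Both reverse inclusions thus follow from the same minimality-plus-ascent-top trick, so the two equalities hold. The only delicate points I anticipate are verifying that the candidate occurrences $x_{j'}x_ix_jx_k$ and $x_{b'}x_ax_bx_c$ really are order isomorphic to $1213$ and $1312$ respectively (which is immediate from the strict inequalities) and correctly treating the boundary cases $j'=i$, $j'-1=i$ (and their analogues for $312$); once those are dispatched, the key leverage---that in a modified ascent sequence every leftmost copy is an ascent top---does all the work.
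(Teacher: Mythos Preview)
Your proof is correct and follows essentially the same approach as the paper: both arguments exploit the characterization $\nub(x)=\asctops(x)$ to show that the leftmost copy of the ``$1$'' in a $213$- (resp.\ $312$-) occurrence must lie to the left of the ``$2$'' (resp.\ ``$3$''). The only cosmetic difference is that the paper fixes $i$ and $k$ and chooses $j$ to be the position of the leftmost minimum strictly between them (so that $x_{j-1}>x_j$ is immediate and no contradiction argument is needed), whereas you minimize the middle value globally and reach the same conclusion by contradiction.
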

\begin{proof}
Clearly, $\Modasc(213)\subseteq\Modasc(1213)$. Conversely,
let $x\in\Modasc$ and suppose that~$x$ contains~$213$. Let $x_ix_jx_k$
be an occurrence of $213$ in $x$. Without losing generality, we can
assume that~$x_j$ is the smallest entry between $x_i$ and $x_k$, taking the
leftmost one in case of ties. Due to our choice, we have $x_{j-1}>x_j$.
Hence $x_j\notin\asctops(x)=\nub(x)$. Further, if $x_{\ell}$ is the
leftmost copy of $x_j$ in $x$, then it must be $\ell<i$. Finally,
we obtain the desired occurrence~$x_{\ell}x_ix_jx_k$ of~$1213$.
To prove the remaining equality, simply replace~$213$ with~$312$
and use the same argument.
\end{proof}

\section{Patterns $221$ and $2321$}\label{section_221_2321}

Recall from Section~\ref{section_primitive} that any $x\in\Modasc$
is obtained (uniquely) from a primitive modified ascent
sequence~$w$ by suitably inserting some flat steps. If~$y$ is primitive
and $w$ avoids~$y$, then inserting flat steps does not create
occurrences of~$y$ in $x$. In other words, all the positions between
two consecutive entries of $w$ are active sites in this sense.
It is clear that the same mechanic fails if $y$ is not primitive.
For instance, the primitive sequence~$w=12$ avoids~$122$, but the
insertion of a flat step at the end gives~$x=122$ (which contains~$122$).
In this section, however, we are able to slightly tweak this approach
by computing the distribution of active sites on~$\Prim(221)$.
En passant, we enumerate $\Modasc(2321)$, finally settling the
remaining case of a conjecture by Duncan and Steingr\'imsson~\cite{DS}.

\begin{proposition}\label{prop_221_2321}
For each $n\ge 0$, we have $\Prim_n(221)=\Prim_n(2321)$.
Furthermore,
$$
\std\left(\Prim_n(221)\right)=1\oplus\Sym_{n-1}(\dashpatt),
\quad\text{where}\quad
\dashpatt=
\begin{tikzpicture}[scale=0.4, baseline=20.5pt]
\fill[NE-lines] (1,0) rectangle (2,4);
\draw [semithick] (0.001,0.001) grid (3.999,3.999);
\filldraw (1,3) circle (6pt);
\filldraw (2,2) circle (6pt);
\filldraw (3,1) circle (6pt);
\end{tikzpicture}.
$$
\end{proposition}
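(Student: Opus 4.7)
The plan has two steps. First I would establish the Wilf-equivalence $\Prim_n(221)=\Prim_n(2321)$, following closely the template of Lemma~\ref{lemma_213_231}. The inclusion $\Prim(221)\subseteq\Prim(2321)$ is immediate, since any occurrence of $2321$ at positions $i<j<k<\ell$ contains a $221$ at positions $i,k,\ell$. Conversely, if $x\in\Prim$ contains $221$ at positions $i<j<k$, that is $x_i=x_j>x_k$, then $x_j$ is not a leftmost copy of its value, hence $x_j\notin\nub(x)=\asctops(x)$ by Proposition~\ref{prop_modasc_prop}, and therefore $x_{j-1}\ge x_j$; primitivity upgrades this to $x_{j-1}>x_j$, producing an occurrence $x_ix_{j-1}x_jx_k$ of $2321$.

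Second, I would transfer the $221$-avoidance through standardization by proving the key claim that, for $x\in\Prim$ and $p=\std(x)$, $x$ contains $221$ if and only if $p$ contains $\dashpatt$. The forward direction reuses the strict descent $x_{j-1}>x_j>x_k$ just found: by Lemma~\ref{lemma_std_ascdes} strict descents are preserved by $\std$, yielding $p_{j-1}>p_j>p_k$ with positions $j-1,j$ consecutive and $j<k$, an occurrence of $32\mhyphen 1$ in $p$. Conversely, if $p$ contains $\dashpatt$ at positions $i,i+1,k$ with $i+1<k$, the same lemma gives $x_i>x_{i+1}>x_k$, so $x_{i+1}\notin\asctops(x)=\nub(x)$; hence some $i'<i+1$ satisfies $x_{i'}=x_{i+1}$, producing a $221$ occurrence $x_{i'}x_{i+1}x_k$ in $x$.

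To finish, since $\std$ bijects $\Prim_n$ with $\Omega_n=1\oplus\Sym_{n-1}(\omega)$, the equivalence just established yields $\std(\Prim_n(221))=\Omega_n\cap\Sym_n(\dashpatt)$. Because $\omega=(321,\{1\},\{1\})$ imposes both position- and value-adjacency on top of the underlying classical $321$, every occurrence of $\omega$ is automatically an occurrence of $\dashpatt$, so $\Sym(\dashpatt)\subseteq\Sym(\omega)$; furthermore, the leading $1$ of a permutation in $\Omega$ can never serve as the trailing entry of an occurrence of $\dashpatt$, so avoidance of $\dashpatt$ puts no additional restriction on the initial value. Combining these two observations identifies $\std(\Prim_n(221))$ with $1\oplus\Sym_{n-1}(\dashpatt)$. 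I expect the principal subtlety to lie in the bidirectional correspondence of the second step: it hinges on both the defining identity $\asctops(x)=\nub(x)$ on $\Modasc$ and on primitivity of $x$, and together these are exactly what is needed to bridge the length-$3$ Cayley pattern $221$ with the classical vincular pattern $\dashpatt$.
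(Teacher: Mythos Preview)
Your proposal is correct and follows essentially the same approach as the paper: the equality $\Prim_n(221)=\Prim_n(2321)$ is obtained exactly as you describe, and the biconditional ``$x\ge 221\iff\std(x)\ge\dashpatt$'' is the paper's core argument as well. Your final paragraph, identifying $\Omega_n\cap\Sym_n(\dashpatt)$ with $1\oplus\Sym_{n-1}(\dashpatt)$ via the inclusion $\Sym(\dashpatt)\subseteq\Sym(\omega)$ and the observation that the leading $1$ cannot participate in a $\dashpatt$-occurrence, spells out a step that the paper leaves implicit.
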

\begin{proof}
Let us start with the equality $\Prim_n(221)=\Prim_n(2321)$.
The inclusion $\Prim_n(221)\subseteq\Prim_n(2321)$ is trivial. To prove
the other inclusion, suppose that~$x$ contains~$221$ and let $x_ix_jx_k$
be an occurrence of $221$ in~$x$. Note that $x_j\notin\nub(x)=\asctops(x)$.
Since~$x$ is primitive, it must be $x_{j-1}>x_j$. Thus
$x_ix_{j-1}x_jx_k\simeq 2321$, as wanted.\\
Next we prove that
$\std\bigl(\Prim_n(221)\bigr)=1\oplus\Sym_{n-1}(\dashpatt)$.
Let $x\in\Prim_n$ and let $p=\std(x)$. We show that $x\ge 221$ if and
only if $p\ge\dashpatt$. Initially, suppose that
$x\ge 221$. As showed above, $x$ contains an occurrence
$x_ix_{j-1}x_jx_k$ of $2321$. Then, by Lemma~\ref{lemma_std_ascdes},
we have~$p_{j-1}p_jp_k\simeq\dashpatt$.
Conversely, suppose that~$p$ contains an occurrence $p_{j-1}p_jp_k$
of~$\dashpatt$. By the same lemma, it must be $x_{j-1}>x_j>x_k$,
and thus~$x_j\notin\asctops(x)=\nub(x)$. By taking the leftmost
copy of $x_j$ in $x$, say $x_i$, we get the desired occurrence
$x_ix_jx_k$ of $221$.
\end{proof}

Duncan and Steingr\'imsson~\cite{DS} conjectured that modified ascent
sequences avoiding any of the patterns~$212$, $1212$, $2132$, $2213$,
$2231$ and $2321$ are counted by the Bell numbers. More specifically,
they suggested that the distribution of the number of ascents was given
by the reverse of the distribution of blocks on set partitions.
The current author~\cite{Ce2} settled this conjecture for all the
patterns except for~$2321$, which we are finally able to solve here.

\begin{proposition}\label{conj_DS}
The cardinality of $\Modasc_n(2321)$ is equal to the $n$th Bell number.
\end{proposition}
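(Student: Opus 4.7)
The plan is to combine Proposition~\ref{prop_221_2321} with Proposition~\ref{prop_prim_to_full} and the classical fact that permutations avoiding the vincular pattern $\dashpatt$ are counted by the Bell numbers. By Proposition~\ref{prop_221_2321}, we already have $\Prim_n(2321)=\Prim_n(221)$ and standardization realizes a size-preserving bijection between $\Prim_n(221)$ and $1\oplus\Sym_{n-1}(\dashpatt)$. Consequently
$$
|\Prim_n(2321)|=|\Sym_{n-1}(\dashpatt)|.
$$

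The next step is to invoke Claesson's classification of permutations avoiding a single vincular pattern of length three: each of the six patterns of the form $ab\mhyphen c$ or $a\mhyphen bc$ yields a set enumerated by the Bell numbers. In particular $|\Sym_m(\dashpatt)|=B_m$, so $|\Prim_n(2321)|=B_{n-1}$. (I would simply cite this classical result rather than reprove it.)

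With the primitive count in hand, Proposition~\ref{prop_prim_to_full} gives
\begin{equation*}
|\Modasc_n(2321)|=\sum_{k=1}^n\binom{n-1}{k-1}|\Prim_k(2321)|
=\sum_{k=1}^n\binom{n-1}{k-1}B_{k-1}
=\sum_{j=0}^{n-1}\binom{n-1}{j}B_j.
\end{equation*}
The final step is then just the classical binomial recurrence for the Bell numbers, $B_n=\sum_{j=0}^{n-1}\binom{n-1}{j}B_j$, which identifies the right-hand side as $B_n$.

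There is essentially no obstacle left: Proposition~\ref{prop_221_2321} has already absorbed the only delicate combinatorial work (the reduction from the non-primitive pattern $2321$ to the primitive pattern $221$ on primitive sequences, together with the transport through $\std$), so the proof reduces to citing Claesson's Bell-number result for $\dashpatt$ and applying a standard binomial identity. The one thing worth double-checking while writing is the indexing shift introduced by the summand $1\oplus\Sym_{n-1}(\dashpatt)$, so that the exponent of $B$ in the binomial transform lines up exactly with the Bell recurrence; this is the only place an off-by-one slip could sneak in.
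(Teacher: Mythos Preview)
Your proof is correct and follows essentially the same route as the paper: both combine Proposition~\ref{prop_prim_to_full} (applicable since $2321$ is primitive) with Proposition~\ref{prop_221_2321}, Claesson's result $|\Sym_m(\dashpatt)|=B_m$, and the Bell recurrence $B_n=\sum_{j=0}^{n-1}\binom{n-1}{j}B_j$. One tiny inaccuracy in your parenthetical remark: not all twelve vincular patterns of the form $ab\mhyphen c$ or $a\mhyphen bc$ are Bell-counted in Claesson's classification (some give Catalan numbers), but the specific fact you need for $\dashpatt=32\mhyphen 1$ is indeed his Proposition~2.
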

\begin{proof}
Claesson~\cite[Prop.~2]{Cl} showed that $|\Sym_n(\dashpatt)|=b_n$,
where~$b_n$ is the $n$th Bell number. Here, we have:
\begin{align*}
|\Modasc_n(2321)|&=\sum_{k=1}^n\binom{n-1}{k-1}|\Prim_k(2321)|
& \text{by Proposition~\ref{prop_prim_to_full}}\\
&=\sum_{k=1}^n\binom{n-1}{k-1}|1\oplus\Sym_{k-1}(\dashpatt)|
& \text{by Proposition~\ref{prop_221_2321}}\\
&=\sum_{k=1}^n\binom{n-1}{k-1}b_{k-1}& \text{by Claesson~\cite{Cl}}\\
&=b_n,
\end{align*}
where the last equality is a well known recurrence for the Bell numbers.
\end{proof}

Next, we recall a useful bijection\footnote{Claesson's map is
defined on permutations avoiding the reverse of $\dashpatt$.} between
set partitions of $[n]$ and $\Sym_n(\dashpatt)$ originally discovered
by Claesson~\cite[Prop.~2]{Cl}.
Given a partition $\beta$ of $[n]$, the \emph{standard representation}
of $\beta$ is obtained by writing
\begin{itemize}
\item[(i)] each block with its least element last, and the other
elements in increasing order;
\item[(ii)] blocks in increasing order of their least element, with
dashes separating two consecutive blocks.
\end{itemize}
For instance, the standard representation of
$$
\beta=\{\{1,3,6\},\{2,7\},\{4\},\{5,8,9\}\}
\quad\text{is}\quad
\beta=361\mhyphen 72\mhyphen 4\mhyphen 895.
$$
Then $\beta$ is associated with the $(\dashpatt)$-avoiding
permutation $p$ obtained by writing $\beta$ in standard representation,
and erasing the dashes. The set partition~$\beta$ in the previous example
is associated with $p=361724895$.
Claesson~\cite[Prop.~3]{Cl} showed that the number of
$(\dashpatt)$-avoding permutations of length~$n$ with~$j$ right-to-left
minima is equal to the $(n,j)$th Stirling number of the second
kind~$S(n,j)$. The next lemma follows in a similar fashion.

\begin{lemma}\label{lemma_des_notsingl}
Let $p\in\Sym(\dashpatt)$ be associated with the set partition~$\beta$
via Claesson's bijection. Then $\des(p)$ is equal to the number
of blocks of $\beta$ that are not singletons.
\end{lemma}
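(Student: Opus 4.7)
The plan is to unpack Claesson's bijection and track descents block by block. Writing $\beta = B_1 \mhyphen B_2 \mhyphen \cdots \mhyphen B_m$ in standard representation, where the blocks are ordered so that their least elements satisfy $\ell_1 < \ell_2 < \cdots < \ell_m$, I will classify every adjacent pair of entries in $p$ as either an internal pair (both coming from the same block) or a boundary pair (crossing from $B_i$ to $B_{i+1}$), and check whether each such pair forms a descent.

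First I would handle the internal pairs. A block $B_i = \{a_1 < a_2 < \cdots < a_k\}$ is written in standard form as $a_2 a_3 \cdots a_k a_1$. The prefix $a_2 a_3 \cdots a_k$ is strictly increasing, contributing no descent. If $k = 1$, the block is a single letter $a_1$ and contributes no descent at all. If $k \geq 2$, then the transition $a_k \to a_1$ is a descent since $a_1 < a_k$, and it is the unique internal descent of $B_i$. Hence each non-singleton block contributes exactly one descent, and singleton blocks contribute none.

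Next I would handle the boundary pairs. The last entry written in $B_i$ is always $\ell_i$ (the least element of $B_i$, placed last by step (i) of the bijection). The first entry written in $B_{i+1}$ is $\ell_{i+1}$ if $B_{i+1}$ is a singleton, and the second-smallest element of $B_{i+1}$ otherwise; in either case, this first entry is at least $\ell_{i+1}$, which exceeds $\ell_i$ by the ordering chosen in step (ii). Therefore every boundary pair is an ascent, contributing no descent.

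Adding the two contributions, $\des(p)$ equals the number of blocks $B_i$ with $|B_i| \geq 2$, which is precisely the number of non-singleton blocks of $\beta$. I do not expect any real obstacle here; the only subtlety is to keep the two conventions of Claesson's bijection straight (least element last within a block, blocks ordered by least element), which makes both internal descents and boundary ascents fall out directly.
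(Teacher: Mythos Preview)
Your argument is correct and follows the same idea as the paper's proof, just written out in more detail: the paper observes in one line that $p_{i-1}>p_i$ if and only if $p_i$ is the minimum of a block of size at least two, which is exactly what your internal/boundary case analysis establishes.
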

\begin{proof}
There is a descent~$p_{i-1}>p_{i}$ in~$p$ if and only if~$p_{i}$
is the minimum of a block of~$\beta$ that has size two or more.
\end{proof}

From now on, let $\Par[n]$ denote the set of set partitions over $[n]$
and let
$$
p_{n,i}=|\{\beta\in\Par[n]:
\text{$\beta$ has $i$ blocks that are not singletons}\}|.
$$
The coefficients $p_{n,i}$ (see also A124324) are related to the
Stirling numbers of the second kind by the following proposition.

\begin{proposition}\label{prop_stirl2}
We have
$$
S(n,n-h)=\sum_{i=h+1}^n\binom{n-1}{n-i}p_{i-1,i-1-h}.
$$
\end{proposition}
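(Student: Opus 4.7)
The plan is to establish the identity by an explicit bijection. First, I would rewrite the right-hand side by noting that $\binom{n-1}{n-i}=\binom{n-1}{i-1}$ counts the $(i-1)$-element subsets of $[n-1]$, so grouping the outer sum by $T\subseteq[n-1]$ with $|T|=i-1$ recasts it as
\[
\sum_{T\subseteq[n-1]}\bigl|\{\tau\in\Par(T):\tau\text{ has exactly }|T|-h\text{ non-singleton blocks}\}\bigr|.
\]
It then suffices to exhibit a bijection between such pairs $(T,\tau)$ and the set of partitions $\sigma\in\Par[n]$ with $n-h$ blocks.

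The bijection $\Phi$ I would define sends $(T,\tau)$ to the partition $\sigma$ of $[n]$ whose blocks are: (i) the non-singleton blocks of $\tau$, carried over verbatim; (ii) one distinguished block $C:=\{n\}\cup S_\tau$, where $S_\tau\subseteq T$ is the union of $\tau$'s singleton blocks; and (iii) the singleton $\{x\}$ for every $x\in[n-1]\setminus T$. A quick count gives exactly $(|T|-h)+1+(n-1-|T|)=n-h$ blocks, and note that $C$ is itself a singleton precisely when $S_\tau=\emptyset$. For the inverse, given $\sigma\in\Par[n]$ with $n-h$ blocks, I would let $C$ be the block of $\sigma$ containing $n$, set $T:=\bigl(\bigcup_{B\in\sigma,\,|B|\ge 2}B\bigr)\setminus\{n\}\subseteq[n-1]$, and let $\tau\in\Par(T)$ consist of the non-singleton blocks of $\sigma$ that avoid $n$, together with the singleton $\{x\}$ for each $x\in C\setminus\{n\}$.

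The main verification is that $\tau$ has exactly $|T|-h$ non-singleton blocks, and this is the only step requiring care. I would split into two cases according to whether $n$ is a singleton of $\sigma$ or not, writing $j$ for the number of non-singleton blocks of $\sigma$ that do not contain $n$. In the first case the non-singletons of $\sigma$ cover $h+j$ elements of $[n-1]$ and $\tau$ inherits all $j$ of them as its non-singletons; in the second case $\sigma$ has $j+1$ non-singletons in total, the block $C$ contributes the singletons of $\tau$, and the same count yields $|T|=h+j$ with $\tau$ again having $j$ non-singleton blocks. There is no genuine obstacle beyond the two-case bookkeeping; the only subtle point is confirming that the merge $\{n\}\cup S_\tau$ is uniquely reversible, which is immediate since $S_\tau=C\setminus\{n\}$ is determined by $\sigma$.
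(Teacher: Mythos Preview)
Your proof is correct and follows essentially the same bijective approach as the paper. The only cosmetic difference is that the paper anchors the construction at the element~$1$ (letting~$A$ be the block containing~$1$, with the singletons of the auxiliary partition~$\alpha$ merging with~$1$ to form~$A$), whereas you anchor at~$n$ (letting~$C$ be the block containing~$n$, with the singletons~$S_\tau$ of~$\tau$ merging with~$n$ to form~$C$); the two constructions are mirror images of one another.
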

\begin{proof}
Let $\beta\in\Par[n]$ be a set partition with $n-h$ blocks.
Then $\beta$ consists of
\begin{itemize}
\item a block $A$ that contains $1$;
\item some singletons $\{s_1\},\dots,\{s_{n-i}\}$;
\item some blocks $B_1,\dots,B_{i-1-h}$ of size at least two,
\end{itemize}
where $n-i\le n-h-1\iff i\ge h+1$. Alternatively, $\beta$ is
uniquely determined by choosing
\begin{itemize}
\item the singletons $\{s_1\},\dots,\{s_{n-i}\}$, which can be done
in $\binom{n-1}{n-i}$ ways;
\item a set partition $\alpha$ of the remaining $n-1-(n-i)=i-1$ elements,
excluding~$1$, with $i-1-h$ blocks that are not singletons; here,
the singletons of $\alpha$ shall form the block~$A$, together with~$1$,
while the $i-1-h$ non-singletons block are $B_1,\dots,B_{i-1-h}$.
\end{itemize}
More schematically,
\begin{align*}
\beta&=
\{\overbrace{\{1,a_1,\dots,a_{\ell}\}}^A,\;
\{s_1\},\dots,\{s_{n-i}\},\;
B_1,\dots,B_{i-1-h}\};
\\
\alpha&=\{\{a_1\},\dots,\{a_{\ell}\},\;B_1,\dots,B_{i-1-h}\}.
\end{align*}
Since there are exactly $p_{i-1,i-1-h}$ partitions $\alpha$ as above,
our claim follows.
\end{proof}

\begin{remark}\label{remark_egf}
A weighted exponential generating function for the coefficients
$p_{n,i}$ is
\begin{align*}
P_s(t)=\sum_{n\ge 0}\left(\sum_{i\ge 0}p_{n,i}s^i\right)\frac{t^n}{n!}
&=\exp(s(e^t-t-1)+t),
\end{align*}
obtained my marking every non-singleton block with~$s$.
Proposition~\ref{prop_stirl2} could be established algebraically
by observing that
\begin{align*}
\sum_{n\ge 0}\left(\sum_{i\ge 0}S(n,i)s^i\right)\frac{t^n}{n!}
&=\exp(s(e^t-1))\\
&=P_s(t)\cdot\exp\bigl(t(s-1)\bigr).
\end{align*}
The proof is rather technical, and it can be found in the Appendix.
\end{remark}

Now, our goal is to prove that the number of $2321$-avoiding modified
ascent sequences with $h$ ascents is equal to $S(n,n-h)$.
By Proposition~\ref{prop_221_2321} and Lemma~\ref{lemma_std_ascdes},
\begin{align*}
|\{x\in\Prim_{n}(2321):\asc(x)=h\}|&=
|\{p\in 1\oplus\Sym_{n-1}(\dashpatt):\asc(p)=h\}|\\
&=|\{p\in\Sym_{n-1}(\dashpatt):\asc(p)=h-1\}|\\
&=|\{p\in\Sym_{n-1}(\dashpatt):\des(p)=n-h-1\}|\\
&=p_{n-1,n-h-1},
\end{align*}
where the last step follows by Lemma~\ref{lemma_des_notsingl}.
Finally, since the insertion of any number of flat steps preserves
the number of ascents, by Proposition~\ref{prop_prim_to_full}
we have
\begin{align*}
|\{x\in\Modasc_{n}(2321):\asc(x)=h\}|
&=\sum_{i=h+1}^n\binom{n-1}{i-1}|\{x\in\Prim_{i}(2321):\asc(x)=h\}|\\
&=\sum_{i=h+1}^n\binom{n-1}{i-1}p_{i-1,i-h-1}\\
&=S(n,n-h),
\end{align*}
where the last equality is Proposition~\ref{prop_stirl2}.

Let us now go back to the pattern $221$.

\begin{proposition}
We have
$$
|\Modasc_n(221)|=\sum_{k=1}^n\sum_{i=1}^k S(k-1,i-1)\binom{n-1-k+i}{i-1}.
$$
\end{proposition}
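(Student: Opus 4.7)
The plan is to refine Proposition~\ref{prop_prim_to_full} by tracking the number of \emph{active sites} in each primitive skeleton, as suggested in the paragraph preceding Proposition~\ref{prop_221_2321}. Every $x \in \Modasc_n(221)$ is uniquely obtained from some $w \in \Prim_k(221)$ by inserting $n-k$ flat steps, but since $221$ is not primitive, only certain insertion positions are admissible.

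The first step is to identify the admissible positions. Let $w = w_1 \cdots w_k \in \Prim_k(221)$. Inserting an extra copy of $w_i$ creates a $221$ occurrence iff some $w_j$ with $j > i$ satisfies $w_j < w_i$; equivalently, iff $w_i \notin \wrtlmin(w)$. Conversely, duplicating $w_i \in \wrtlmin(w)$ is always safe, and doing so simultaneously at several wrtlmin positions remains safe, because any freshly created $221$-occurrence would require a pair of equal entries followed by a strictly smaller one---contradicting the wrtlmin property of the duplicated entry. Hence, if $|\wrtlmin(w)| = i$, the number of $x \in \Modasc_n(221)$ collapsing to $w$ equals the number of compositions of $n-k$ into $i$ nonnegative parts, namely $\binom{n-k+i-1}{i-1}$.

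The second step is to determine how many $w \in \Prim_k(221)$ satisfy $|\wrtlmin(w)| = i$. By Proposition~\ref{prop_221_2321}, standardisation bijects $\Prim_k(221)$ with $1 \oplus \Sym_{k-1}(\dashpatt)$, and by Lemma~\ref{lemma_std_props}(iii) it carries $\wrtlmin(w)$ onto $\rtlmin(\std(w))$ as index sets. Writing $\std(w) = 1q$ with $q \in \Sym_{k-1}(\dashpatt)$, the initial $1$ is always a right-to-left minimum of $\std(w)$, so $|\wrtlmin(w)| = 1 + |\rtlmin(q)|$. Claesson's result cited just before Lemma~\ref{lemma_des_notsingl} asserts that the number of $q \in \Sym_{k-1}(\dashpatt)$ with exactly $j$ right-to-left minima is $S(k-1,j)$. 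Setting $j = i-1$ yields exactly $S(k-1,i-1)$ primitive $221$-avoiders of length $k$ with $i$ weak right-to-left minima.

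Combining the two steps and summing over $k$ and $i$ gives
$$
|\Modasc_n(221)| = \sum_{k=1}^n \sum_{i=1}^k S(k-1,i-1)\binom{n-k+i-1}{i-1},
$$
which is the claimed identity since $\binom{n-k+i-1}{i-1} = \binom{n-1-k+i}{i-1}$. The only delicate point is the first step: one must check that admissibility of several simultaneous insertions reduces to the individual wrtlmin conditions, but the argument above (no pair of equal duplicated entries can acquire a strictly smaller successor) handles this cleanly. The remainder is bookkeeping on top of the transport theorem and Claesson's bijection.
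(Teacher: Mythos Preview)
Your proof is correct and follows essentially the same approach as the paper's: identify the active sites as the weak right-to-left minima of the primitive skeleton $w$, count the resulting insertions, and use Proposition~\ref{prop_221_2321} together with Lemma~\ref{lemma_std_props}(iii) and Claesson's $\rtlmin$/Stirling result to enumerate skeletons by $|\wrtlmin(w)|$. Your justification that simultaneous insertions at several $\wrtlmin$ positions remain safe is a touch more explicit than the paper's, but the underlying argument is identical.
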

\begin{proof}
Let $w\in\Prim_k(221)$. For $i=1,2,\dots,k$, we say that $i$ is
an \emph{active site} if inserting a flat step $a=w_i$ in the position
between $w_i$ and $w_{i+1}$ (or after $w_k$, if $i=k$) does not create
an occurrence of~$221$; that is, if
$$
w_1\cdots w_i\;w_i\;w_{i+1}\cdots w_k
\quad\text{avoids}\;221.
$$
It is easy to see that $i$ is active if and only if
$w_i$ is a weak right-to-left minimum.
Specifically, if $w\in\Prim_k(221)$ has $i$ weak right-to-left minima,
then $w$ has $k-i$ sites that are not active. Now, any sequence
$x\in\Modasc_n(221)$ is obtained from some $w\in\Prim_k(221)$,
with~$1\le k\le n$, by inserting $n-k$ flat steps among a total of $n-1$
positions (recall that $x_1=1$ is forced), minus the
$k-|\wrtlmin(w)|$ sites that are not active.
Thus, we can adapt the formula of Proposition~\ref{prop_prim_to_full}
accordingly to obtain
\begin{align*}
|\Modasc_n(221)|
&=\sum_{k=1}^n\sum_{i=1}^k
|\{w\in\Prim_k(221):\#\wrtlmin(w)=i\}|\binom{n-1-k+i}{n-k}\\
&=\sum_{k=1}^n\sum_{i=1}^k
|\{w\in\Prim_k(221):\#\wrtlmin(w)=i\}|\binom{n-1-k+i}{i-1}.
\end{align*}
Finally, by Proposition~\ref{prop_221_2321} and Lemma~\ref{lemma_std_props},
\begin{align*}
|\{w\in\Prim_k(221):\#\wrtlmin(w)=i \}|
&=|\{p\in\Sym_{k-1}(\dashpatt):\#\rtlmin(p)=i-1\}|\\
&=S(k-1,i-1),
\end{align*}
where the last equality is once again due to Claesson~\cite[Prop.~3]{Cl}.
\end{proof}

For $n\ge 0$, the sequence $|\Modasc_n(221)|$ starts with
$1,1,2,5,14,44,155,607,2617$ and does not appear in the OEIS~\cite{Sl}.

\section{Final remarks and future directions}\label{section_final}

In this paper, we enumerated the sets $\Modasc(y)$ for every
pattern~$y$ of length at most three, except for
$y\in\{111,211\}$. Interestingly, both patterns are currently open on
plain ascent sequences too. We have reported the corresponding data
in Table~\ref{table_unsolved_patts}, together with longer patterns
we were not able to solve despite the promising evidence.
We end with a list of suggestions for future work.

\begin{table}
\centering
\def\arraystretch{1}
\begin{tabular}{lll}
\toprule
$y$ & $|\Modasc_n(y)|$ & $|\Prim_n(y)|$\\
\midrule
111 & \small{1, 2, 4, 10, 29, 97, 367, 1550} &
\small{1, 1, 2, 5, 14, 46, 172, 718, 3317, 16796}\\
211,1223 & A047970? & \small{1, 1, 2, 5, 14, 44, 153, 581, 2385}\\
1324,1342 & A007317? & Catalan?\\
4321 & \small{1, 2, 5, 15, 53, 217, 1008, 5188} &
\small{1, 1, 2, 5, 16, 61, 265, 1267}\\
\bottomrule
\end{tabular}
\caption{Unsolved patterns.}\label{table_unsolved_patts}
\end{table}

\begin{itemize}
\item In Section~\ref{section_1213_1312}, we proved that
$\Modasc(213)=\Modasc(1213)$ and $\Modasc(312)=\Modasc(1312)$.
Are there any other examples of patterns that are equivalent in this
sense? More in general, can we characterize all the sets of
equivalent patterns?
\item There is only one Cayley permutation~$x$ whose standardization is
the decreasing permutation $p=k\cdots 21$, namely $x=p$.
Thus, by Theorem~\ref{thm_transport},
$$
\std\bigl(\Prim(k\cdots 21)\bigr)=\Omega(k\cdots 21).
$$
We solved the case $k=3$ in Section~\ref{section_213_231_321}.
Can we tackle the general case with the same approach?
In a similar fashion, can we generalize what we proved in
Section~\ref{section_123} for $\Prim(123)$ to $\Prim(12\cdots k)$?
\item Among the unsolved patterns, it appears that
$$
\Modasc_n(211)=\Modasc_n(1223)
\quad\text{and}\quad
\Prim_n(211)=\Prim_n(1223)
$$
at least up to $n=10$. Can we prove that the equalities hold
for every~$n$?
\item Note the following two, rather curious, chains of
inclusions:
$$
\begin{array}{ccccc}
\Prim(213) & \subseteq & \Modasc(213) & \subseteq & \Modasc(1324);\\
\Prim(231) & \subseteq & \Modasc(231) & \subseteq & \Modasc(1342),\\
\text{(Motzkin)} && \text{(Catalan)} && \text{(A007317?)}
\end{array}
$$
where each term is (counted by) the binomial transform of the term
to its left. Can we use this to count $\Modasc(1324)$ and $\Modasc(1342)$?
Is this phenomenon more general?
\item In Section~\ref{section_122}, we found an {\ogf} for $\Modasc(122)$.
It appears that
$$
\Modasc_{122}(t)=(1-t)\Modasc_{211}(t),
$$
where $211$ is one of the patterns we could not solve. Why?
\item We have decided to leave the study of modified ascent sequences
avoiding pairs (or sets) of patterns for a future investigation.
An example that is particularly dear to us is the
following. The Burge transpose~\cite{CC} maps bijectively
$\Modasc(2312,3412)$ to the set~$\F(3412)$ of Fishburn
permutations avoiding~$3412$. A numerical analysis
suggests that the pair of statistics right-to-left maxima and
right-to-left minima on $\F(3412)$ is equidistributed with
the pair left-to-right maxima and right-to-left maxima
over the set of $312$-sortable permutations~\cite{CCF}.
The first terms of the arising counting sequence match
A202062~\cite{Sl}. Currently, no formula or generating function
for~A202062 is known. An asymptotic analysis of this sequence has
been conducted recently by Conway \etal~\cite{CCEG}.
\end{itemize}

\textbf{Acknowledgements.} The author is grateful to Anders Claesson
for suggesting that a proof of Proposition~\ref{prop_stirl2}
could be obtained via exponential generating functions, as
well as for pointing out the Schensted reference.

\appendix

\section*{Appendix}

Let
$$
P_s(t)=\sum_{n\ge 0}\left(\sum_{i\ge 0}p_{n,i}s^i\right)\frac{t^n}{n!}
\quad\text{and}\quad
Q_s(t)=\sum_{n\ge 0}\left(\sum_{i\ge 0}S(n,i)s^i\right)\frac{t^n}{n!}
$$
be the (weighted) exponential generating functions of the coefficients
$p_{n,i}$, defined in Section~\ref{section_221_2321}, and the
Stirling numbers of the second kind $S(n,i)$, respectively.
We give an algebraic proof of the formula
$$
S(n,n-h)=\sum_{i=h+1}^n\binom{n-1}{n-i}p_{i-1,i-1-h},
$$
which we proved combinatorially in Proposition~\ref{prop_stirl2}.
Recall from Remark~\ref{remark_egf} that
\begin{align*}
Q_s(t) &= P_s(t)\cdot\exp\bigl(t(s-1)\bigr)\\
\iff Q_s(t)\cdot\exp(t) &= P_s(t)\cdot\exp(st).
\end{align*}
Let us expand both sides of the latter equation. First,
\begin{align*}
Q_s(t)\cdot\exp(t) &=
\left(\sum_{n\ge 0}\left(\sum_{k\ge 0}S(n,k)s^k\right)\frac{t^n}{n!}\right)
\cdot
\left(\sum_{n\ge 0}\frac{t^n}{n!}\right)\\
&=\sum_{n\ge 0}\left(
\sum_{j\ge 0}\binom{n}{j}\sum_{k\ge 0}S(j,k)s^k
\right)\frac{t^n}{n!}\\
&=\sum_{n\ge 0}\left(
\sum_{j\ge 0}\sum_{k\ge 0}\binom{n}{j}S(j,k)s^k
\right)\frac{t^n}{n!}\\
&=\sum_{n\ge 0}\left(
\sum_{k\ge 0}S(n+1,k+1)s^k
\right)\frac{t^n}{n!},
\end{align*}
where at the last step we used that
$$
S(n+1,k+1)=\sum_{j\ge 0}\binom{n}{j}S(j,k).
$$
Secondly,
\begin{align*}
P_s(t)\cdot\exp(st) &=
\left(\sum_{n\ge 0}\left(\sum_{k\ge 0}p_{n,k}s^k\right)\frac{t^n}{n!}\right)
\cdot
\left(\sum_{n\ge 0}s^n\frac{t^n}{n!}\right)\\
&=\sum_{n\ge 0}\left(
\sum_{j\ge 0}\binom{n}{j}\sum_{k\ge 0}p_{j,k}s^ks^{n-j}
\right)\frac{t^n}{n!}\\
&=\sum_{n\ge 0}\left(
\sum_{j\ge 0}\sum_{k\ge 0}\binom{n}{j}p_{j,k}s^{n+k-j}
\right)\frac{t^n}{n!}.
\end{align*}
By comparing the coefficients in front of $x^n/n!$ (and using $\ell$
instead of $k$ in the left-hand sum), we obtain
$$
\sum_{\ell\ge 0}S(n+1,\ell+1)s^\ell
=\sum_{j\ge 0}\left(\sum_{k\ge 0}\binom{n}{j}p_{j,k}\right)s^{n+k-j}.
$$
Finally, since $\ell=n+k-j\iff k=\ell+j-n$, we have
\begin{align*}
S(n+1,\ell+1) &=\sum_{j\ge 0}\binom{n}{j}p_{j,\ell+j-n}\\
&=\sum_{j=n-\ell}^n\binom{n}{j}p_{j,\ell+j-n}\\
\iff S(n,\ell)&= \sum_{j=n-\ell}^{n-1}\binom{n-1}{j}p_{j,(\ell-1)+j-(n-1)}\\
\iff S(n,\ell)&= \sum_{i=n-\ell+1}^{n}\binom{n-1}{i-1}p_{i-1,\ell+(i-1)-n}\\
\iff S(n,n-h) &= \sum_{i=h+1}^{n}\binom{n-1}{i-1}p_{i-1,i-1-h}\\
&=\sum_{i=h+1}^{n}\binom{n-1}{n-i}p_{i-1,i-1-h}.
\end{align*}

\end{document}